\title[A generalized FEM for the strongly damped wave equation]{A generalized finite element method for the strongly damped wave equation with rapidly varying data}
\author[Per Ljung]{Per Ljung\textsuperscript{1}}
\author[Axel M{\aa}lqvist]{Axel M{\aa}lqvist\textsuperscript{1}} 
\author[Anna Persson]{Anna Persson\textsuperscript{2}}
\newtheorem{definition}{Definition}[section]
\newtheorem{thm}[definition]{Theorem}
\newtheorem{lemma}[definition]{Lemma}
\newtheorem{corollary}[definition]{Corollary}
\newtheorem*{assump}{\sc{Assumptions}}
\theoremstyle{remark}
\newtheorem{remark}[definition]{Remark}
\numberwithin{equation}{section}
\DeclareMathOperator*{\esssup}{ess\,sup}
\DeclareMathOperator*{\essinf}{ess\,inf}
\DeclareMathOperator{\supp}{supp}
\DeclareMathOperator{\diam}{diam}
\newcommand{\ddt}{\bar\partial_t}
\newcommand{\ms}{\mathrm{ms}}
\newcommand{\f}{\mathrm{f}}
\newcommand{\ulod}[1]{u_{\mathrm{lod}}^{#1}}
\newcommand{\ulodk}[1]{u_{\mathrm{lod}, k}^{#1}}
\newcommand{\Z}[1]{Z^{#1}_h}
\newcommand{\Zlod}[1]{Z_\mathrm{lod}^{#1}}
\newcommand{\vertiii}[1]{{\left\vert\kern-0.25ex\left\vert\kern-0.25ex\left\vert #1 
		\right\vert\kern-0.25ex\right\vert\kern-0.25ex\right\vert}}
\begin{document}
	
	\begin{abstract}
		We propose a generalized finite element method for the strongly damped wave equation with highly varying coefficients. The proposed method is based on the localized orthogonal decomposition introduced in \cite{MalqvistPeterseim}, and is designed to handle independent variations in both the damping and the wave propagation speed respectively. The method does so by automatically correcting for the damping in the transient phase and for the propagation speed in the steady state phase. Convergence of optimal order is proven in $L_2(H^1)$-norm, independent of the derivatives of the coefficients. We present numerical examples that confirm the theoretical findings.
	\end{abstract}
	\keywords{Strongly damped wave equation, multiscale, localized orthogonal decomposition, finite element method, reduced basis method.}
	
	\maketitle
	\footnotetext[1]{Department of Mathematical Sciences, Chalmers University of Technology and University of Gothenburg, SE-412 96 Gothenburg, Sweden.}
	\footnotetext[2]{Department of Mathematics, KTH Royal Institute  of  Technology,, SE-100 44 Stockholm, Sweden.}
\section{Introduction}

This paper is devoted to the study of numerical solutions to the strongly damped wave equation with highly varying coefficients. The equation takes the general form 
\begin{equation}
    \ddot{u} - \nabla \cdot (A\nabla \dot{u} + B\nabla u) = f,
\end{equation}
on a bounded domain $\Omega$. Here, $A$ and $B$ represent the system's damping and wave propagation respectively, $f$ denotes the source term, and the solution $u$ is a displacement function. This equation commonly appears in the modelling of viscoelastic materials, where the strong damping $-\nabla \cdot A\nabla \dot{u}$ arises due to the stress being represented as the sum of an elastic part and a viscous part \cite{ViscoelasticExample1, Attractor1}. Viscoelastic materials have several applications in engineering, including noise dampening, vibration isolation, and shock absorption (see \cite{ViscoBook} for more applications). In particular, in multiscale applications, such as modelling of porous medium or composite materials, $A$ and $B$ are both rapidly varying.

There have been much recent work regarding strongly damped wave equations. For instance, well-posedness of the problem is discussed in \cite{Wp1, WP2, WP3}, asymptotic behavior in \cite{Asym1, Asym2, Asym3, Asym4} solution blowup in \cite{blowup1, blowup2}, and decay estimates in \cite{SDWEref18}. In particular, FEM for the strongly damped wave equation has been analyzed in \cite{RitzVolterra} using the Ritz--Volterra projection, and \cite{FEMdampedwavehom} uses the classical Ritz-projection in the homogeneous case with Rayleigh damping. In these papers, convergence of optimal order is shown. However, in the case of piecewise linear polynomials, the convergence relies on at least $H^2$-regularity in space. Consequently, since the $H^2$-norm depends on the derivatives of the coefficients, the error is bounded by  $\|u\|_{H^2} \sim \max(\varepsilon^{-1}_A, \varepsilon^{-1}_B)$ where $\varepsilon_A$ and $\varepsilon_B$ denote the scales at which $A$ and $B$ vary respectively. The convergence order is thus only valid when the mesh width $h$ fulfills $h < \min(\varepsilon_A, \varepsilon_B)$. In other words, we require a mesh that is fine enough to resolve the variations of $A$ and $B$, which becomes computationally challenging. This type of difficulty is common for equations with rapidly varying data, an issue for which several numerical methods have been developed (see e.g.\ \cite{MS1, MS2, MS3, MS4, MS5}). None of these methods are however applicable to the strongly damped wave equation, where two different multiscale coefficients have to be dealt with. In this paper, we propose a novel multiscale method based on the localized orthogonal decomposition (LOD) method. 

The LOD method is based on the variational multiscale method presented in \cite{VariationalMultiscaleMethod}. It was first introduced in \cite{MalqvistPeterseim}, and has since then been further developed and analyzed for several types of problems (see e.g.\ \cite{lodbook, Parabolic, Wave, SemiLinear, boundaryproblems}). In particular, \cite{quadeigenvalue} studies the LOD method for quadratic eigenvalue problems, which correspond to time-periodic wave equations with weak damping. The main idea of the method is based on a decomposition of the solution space into a coarse and a fine part. The decomposition is done by defining an interpolant that maps functions from an infinite dimensional space into a finite dimensional FE-space. In this way, the kernel of the interpolant captures the finescale features that the coarse FE-space misses, and hence defines the finescale space. Subsequently, one may use the orthogonal complement to this finescale space with respect to a problem-dependent Ritz-projection as a modified FE-space. In the case of time-dependent problems, the LOD method performs particularly well in the sense that the modified FE-space only needs to be computed once, and can then be re-used in each time step.


Multiscale methods, as the localized orthogonal decomposition, are usually designed to handle problems with a single multiscale coefficient. In this sense, the strongly damped wave equation is different, as an extra coefficient appears due to the strong damping. Hence, one of the main challenges for the novel method is how to incorporate the finescale behavior of both coefficients in the computation. Nevertheless, it should be noted that existing multiscale methods are applicable for some special cases of this equation. An example is the case of Rayleigh damping where the coefficients are proportional to each other. Other examples are the steady state case, the transient phase in which the solution evolves rapidly in time, as well as the case of weak damping where no spatial derivatives are present on the damping term.

In this paper we present a generalized finite element method (GFEM) for solving the strongly damped wave equation. The method uses both the damping and diffusion coefficients to construct a generalized finite element space, similar to those in e.g.\ \cite{MalqvistPeterseim, Parabolic}. The solution is then evaluated in this space, but to account for the time dependence, an additional correction is added to it. However, this correction is evaluated on the fine scale, and thus expensive to compute. To overcome this issue, we prove spatial exponential decay for the corrections so that we can restrict the problems to patches in a similar manner as for the modified basis functions in \cite{MalqvistPeterseim}. The effect of the proposed method is that the multiscale basis compensates for the damping early on in the simulation when it is dominant and then gradually starts to compensate for the wave propagation which is dominant at steady state. This is done seamlessly and automatically by the method. Furthermore, we prove optimal order convergence in $L_2(H^1)$-norm for this method. Following this, we show that it is sufficient to compute the finescale corrections for only a few time steps by applying reduced basis (RB) techniques. For related work on RB methods, see e.g.\ \cite{rbref1, rbref2, rbref3}, and for an introduction to the topic we refer to \cite{rbbook}. 

The outline of the paper is as follows: In Section \ref{sec:weakform} we present the weak formulation and classical FEM for the strongly damped wave equation, along with necessary assumptions. Section \ref{sec:gfem} is devoted to the generalized finite element method and its localization procedure. In Section \ref{sec:errorestimates} error estimates for the method are proven. Section \ref{sec:rb} covers the details of the RB approach, and finally in Section \ref{sec:numexamples} we illustrate numerical examples that confirm the theory derived in this paper.

\section{Weak formulation and classical FEM}
\label{sec:weakform}

We consider the wave equation with strong damping of the following form
\begin{alignat}{2}
    \ddot{u} - \nabla \cdot (A\nabla \dot{u} + B\nabla u) &= f, \quad &&\text{in $\Omega \times (0,T]$}, \label{eq:sdwe}\\
    u&= 0, \quad &&\text{on $\Gamma \times (0,T]$}, \label{eq:dbc}\\
    u(0) &= u_0, \quad &&\text{in $\Omega$}, \label{eq:init1} \\
    \dot{u}(0) &= v_0 \quad &&\text{in $\Omega$}, \label{eq:init2}
\end{alignat}
where $T > 0$ and $\Omega$ is a polygonal (or polyhedral) domain in $\mathbb{R}^d, \ d=2,3,$ and $\Gamma := \partial \Omega$. The coefficients $A$ and $B$ describe the damping and propagation speed respectively, and $f$ denotes the source function of the system. We assume $A=A(x)$,  $B=B(x)$ and $f=f(x,t)$, i.e.\ the multiscale coefficients are independent of time. 

Denote by $H^1_0(\Omega)$ the classical Sobolev space with norm 
\begin{equation*}
    \|v\|^2_{H^1(\Omega)} = \|v\|^2_{L_2(\Omega)} + \|\nabla v\|^2_{L_2(\Omega)}
\end{equation*}
whose functions vanish on $\Gamma$. Moreover, let $L_p(0,T; \mathcal{B})$ be the Bochner space with norm 
\begin{align*}
    \|v\|_{L_p(0,T; \mathcal{B})} &= \bigg( \int_0^T \|v\|_{\mathcal{B}}^p\, \mathrm{d}t \bigg)^{1/p}, \ \ p\in [1,\infty), \\
    \|v\|_{L_\infty(0,T; \mathcal{B})} &= \esssup_{t\in [0,T]} \|v\|_{\mathcal{B}},
\end{align*}
where $\mathcal{B}$ is a Banach space with norm $\|\cdot\|_\mathcal{B}$. In this paper, following assumptions are made on the data.

\begin{assump}
	The damping and propagation coefficients $A,B\in L_\infty(\Omega, \mathbb{R}^{d\times d})$ are symmetric and satisfy	
	\begin{align*}
		0 < \alpha_- := \essinf_{x\in \Omega} \inf_{v\in \mathbb{R}^d\backslash \{0\}} \frac{A(x)v\cdot v}{v\cdot v} < \esssup_{x\in \Omega} \sup_{v\in \mathbb{R}^d\backslash \{0\}} \frac{A(x)v\cdot v}{v\cdot v} =: \alpha_+ < \infty, \\
		0 < \beta_- := \essinf_{x\in \Omega} \inf_{v\in \mathbb{R}^d\backslash \{0\}} \frac{B(x)v\cdot v}{v\cdot v} < \esssup_{x\in \Omega} \sup_{v\in \mathbb{R}^d\backslash \{0\}} \frac{B(x)v\cdot v}{v\cdot v} =: \beta_+ < \infty.
	\end{align*}
	In addition, we assume that $f\in L_\infty([0,T];L_2(\Omega))$ and $\dot{f}\in L_2([0,T];L_2(\Omega))$.
\end{assump}

For the spatial discretization, let $\{\mathcal{T}_h\}_{h>0}$ denote a family of shape regular elements that form a partition of the domain $\Omega$. For an element $K \in \mathcal{T}_h$, let the corresponding mesh size be defined as $h_K := \diam(K)$, and denote the largest diameter of the partition by $h := \max_{K\in \mathcal{T}_h} h_K$. We now define the classical FE-space using continuous piecewise linear polynomials as
\begin{equation*}
    S_h := \{v\in \mathcal{C}(\bar{\Omega}) :\text{$v\big|_{\Gamma} = 0$, $v\big|_K$ is a polynomial of partial degree $\leq 1$, $\forall K\in \mathcal{T}_h$}\},
\end{equation*}
and let $V_h = S_h\cap H^1_0(\Omega)$. The semi-discrete FEM becomes: find $u_h(t) \in V_h$ such that
\begin{equation}
    (\ddot{u}_h,v) + a(\dot{u}_h,v) + b(u_h,v) = (f,v), \quad \forall v\in V_h, \ t > 0,
    \label{eq:semidiscreteweakform}
\end{equation}
with initial values $u_h(0) = u_{h,0}$ and $\dot{u}_h(0) = v_{h,0}$ where $u_{h,0}, v_{h,0} \in V_h$ are appropriate approximations of $u_0$ and $v_0$ respectively. Here $(\cdot,\cdot)$ denotes the usual $L_2$-inner product, $a(\cdot,\cdot) = (A\nabla\cdot,\nabla\cdot)$, and $b(\cdot,\cdot) = (B\nabla\cdot,\nabla\cdot)$.

For the temporal discretization, let $0 = t_0 < t_1 < ... < t_N = T$ be a uniform partition with time step $t_n - t_{n-1} = \tau$. The time step here is chosen uniformly for simplicity, but the choice of varying time step is still viable. We apply a backward Euler scheme to get the fully discrete system: find $u^n_h \in V_h$ such that
\begin{equation}
    (\ddt^2 u^n_h, v) + a(\ddt u^n_h, v) + b(u^n_h, v) = (f^n,v), \quad \forall v\in V_h
    \label{eq:fullydiscrete}
\end{equation}
for $n\geq 2$. Here, the discrete derivative is defined as $\ddt u^n_h = (u^n_h - u^{n-1}_h)/\tau$.

For results on regularity and error estimates for the FEM solution of the strongly damped wave equation, we refer to \cite{FEMdampedwavehom}. Moreover, existence and uniqueness of a solution to \eqref{eq:fullydiscrete} is guaranteed by Lax--Milgram.

In the analysis, we use the notations $\|\cdot\|^2_a := a(\cdot,\cdot)$,  $\|\cdot\|^2_b := b(\cdot,\cdot)$, as well as $\vertiii{\cdot}^2 = \tilde{a}(\cdot, \cdot) := a(\cdot, \cdot) + \tau b(\cdot, \cdot)$, and the fact that these are equivalent with the $H^1$-norm. That is, there exist positive constants $C_a, C_b, C_{\tilde{a}}, c_a, c_b, c_{\tilde{a}} \in \mathbb{R}$, such that
\begin{alignat}{2}
c_a\|v\|^2_{H^1} &\leq \|v\|^2_a \leq C_a\|v\|^2_{H^1}, \quad &&\forall v\in H^1(\Omega), \notag \\
c_b\|v\|^2_{H^1} &\leq \|v\|^2_b \leq C_b\|v\|^2_{H^1}, \quad  &&\forall v\in H^1(\Omega), \label{norm_equivalence}\\
c_{\tilde{a}}\|v\|^2_{H^1} &\leq \vertiii{v}^2 \leq C_{\tilde{a}}\|v\|^2_{H^1}, \quad &&\forall v\in H^1(\Omega). \notag
\end{alignat}

\begin{thm}\label{thm:energy_bounds}
	The solution $u^n_h$ to \eqref{eq:fullydiscrete} satisfies the following bounds
	\begin{alignat}{2}
	\|\ddt u^n_h\|^2_{L_2} + \sum_{j=2}^n \tau\|\ddt u^j_h\|^2_{H^1} + \|u^n_h\|^2_{H^1} &\leq C\sum_{j=2}^n \tau\|f^j\|^2_{H^{-1}} \label{energy_bound1} + C(\|\ddt u^1_h\|^2_{L_2} + \|u^1_h\|^2_{H^1}), \\
	\sum_{j=2}^n\tau\|\ddt^2 u^j_h\|^2_{L_2} + \|\ddt u^n_h\|^2_{H^1} &\leq C\sum_{j=2}^n \tau\|f^j\|^2_{L_2} \label{energy_bound2} + C(\|\ddt u^1_h\|^2_{H^1} + \|u^1_h\|^2_{H^1}),.
	\end{alignat}
	for $n\geq 2$.
	
%
\end{thm}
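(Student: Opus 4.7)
The plan is to derive both bounds by standard discrete energy arguments, testing \eqref{eq:fullydiscrete} with an appropriate discrete time derivative of $u^n_h$, summing in time, and exploiting the identity
\begin{equation*}
(\ddt w^n, w^n) = \tfrac{1}{2\tau}\bigl(\|w^n\|^2 - \|w^{n-1}\|^2\bigr) + \tfrac{\tau}{2}\|\ddt w^n\|^2
\end{equation*}
(and the analogous identities for $a$ and $b$), together with the norm equivalences \eqref{norm_equivalence} to convert $\|\cdot\|_a$ and $\|\cdot\|_b$ into $H^1$-norms.

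For \eqref{energy_bound1}, I would test \eqref{eq:fullydiscrete} with $v = \ddt u^n_h$. The three terms on the left then produce, respectively, the telescoping $L_2$-difference of $\ddt u^n_h$ plus a nonnegative numerical viscosity, the coercive damping $\|\ddt u^n_h\|_a^2$, and the telescoping $b$-difference of $u^n_h$. After multiplying by $\tau$ and summing from $j=2$ to $n$, the right-hand side is handled by duality $(f^j,\ddt u^j_h) \le \|f^j\|_{H^{-1}}\|\ddt u^j_h\|_{H^1}$ followed by Young's inequality; the resulting $\|\ddt u^j_h\|_{H^1}^2$-contribution is absorbed into $\sum_j \tau\|\ddt u^j_h\|_a^2$ using the equivalence $c_a\|\cdot\|_{H^1}^2 \leq \|\cdot\|_a^2$. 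A final application of \eqref{norm_equivalence} on the remaining $a$- and $b$-norms yields \eqref{energy_bound1}.

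For \eqref{energy_bound2}, I would instead test \eqref{eq:fullydiscrete} with $v = \ddt^2 u^n_h$. The term $(\ddt^2 u^n_h,\ddt^2 u^n_h)$ immediately gives $\|\ddt^2 u^n_h\|_{L_2}^2$, and the term $a(\ddt u^n_h,\ddt^2 u^n_h)$ telescopes in the $a$-norm to produce $\tfrac{1}{2}\|\ddt u^n_h\|_a^2$. The right-hand side is bounded by $(f^j,\ddt^2 u^j_h) \le \|f^j\|_{L_2}\|\ddt^2 u^j_h\|_{L_2}$ and Young's inequality, absorbing the $\|\ddt^2 u^n_h\|_{L_2}^2$-piece into the left. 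The main obstacle is the term $\sum_{j=2}^n \tau\, b(u^j_h,\ddt^2 u^j_h)$, which does \emph{not} telescope. Here I would apply summation by parts to rewrite
\begin{equation*}
\sum_{j=2}^n \tau\, b(u^j_h,\ddt^2 u^j_h) = b(u^n_h,\ddt u^n_h) - b(u^2_h,\ddt u^1_h) - \tau\sum_{j=2}^{n-1} b(\ddt u^{j+1}_h,\ddt u^j_h).
\end{equation*}
The first boundary term is controlled by Young's inequality as $C\|u^n_h\|_b^2 + \varepsilon\|\ddt u^n_h\|_b^2$, with $\varepsilon$ small enough that the second piece is absorbed by $\tfrac{1}{2}\|\ddt u^n_h\|_a^2$ via norm equivalence, while $\|u^n_h\|_b^2$ is directly controlled by the already-proven \eqref{energy_bound1}. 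The sum on the right is bounded by Cauchy--Schwarz by $C\sum_{j=2}^n \tau\|\ddt u^j_h\|_{H^1}^2$, which is again controlled by \eqref{energy_bound1}. The remaining boundary term $b(u^2_h,\ddt u^1_h)$ contains $u^2_h$, which is not in the stated initial data; I would eliminate it by applying \eqref{energy_bound1} at $n=2$, writing $\|u^2_h\|_{H^1}^2$ in terms of $\|\ddt u^1_h\|_{H^1}^2, \|u^1_h\|_{H^1}^2$ and $\tau\|f^2\|_{L_2}^2$. Collecting everything and invoking \eqref{norm_equivalence} once more gives \eqref{energy_bound2}.

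The main subtlety is the non-telescoping mixed term $b(u^n_h,\ddt^2 u^n_h)$; everything else is a direct discrete energy computation. The key observation making the argument work is that the cross terms produced by the summation by parts are exactly of the type already controlled by \eqref{energy_bound1}, so that bound (1) bootstraps bound (2) without requiring any regularity on $\ddt f$.
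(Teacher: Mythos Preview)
Your proposal is correct and follows essentially the same route as the paper: test \eqref{eq:fullydiscrete} with $\ddt u^n_h$ for \eqref{energy_bound1} and with $\ddt^2 u^n_h$ for \eqref{energy_bound2}, handle the non-telescoping term $\sum_j \tau\,b(u^j_h,\ddt^2 u^j_h)$ by exactly the summation-by-parts identity you wrote, and bound all resulting cross terms (including $\|u^2_h\|_{H^1}$ and $\|u^n_h\|_{H^1}$) by invoking the already-established \eqref{energy_bound1}. The only cosmetic difference is that the paper phrases the telescoping step via the inequality $\tau(\ddt w^n,w^n)\ge \tfrac12\|w^n\|^2-\tfrac12\|w^{n-1}\|^2$ rather than your exact identity, which amounts to dropping the same nonnegative viscosity term you identify.
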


\begin{proof}
	To prove \eqref{energy_bound1}, choose $v=\tau \ddt u^n_h$ in \eqref{eq:fullydiscrete} to get
	\begin{align}\label{energy_bounds_eq1}
	\tau(\ddt^2 u^n_h, \ddt u^n_h) + \tau\|\ddt u^n_h\|^2_a + \tau b(u^n_h, \ddt u^n_h) = \tau (f^n,\ddt u^n_h).
	\end{align}
	Due to Cauchy--Schwarz and Young's inequality we have the following lower bound
	\begin{align*}
	\tau(\ddt^2 u^n_h, \ddt u^n_h) = \|\ddt u^n_h\|^2_{L_2} - (\ddt u^{n-1}_h, \ddt u^n_h) \geq \frac{1}{2}\|\ddt u^n_h\|^2_{L_2} - \frac{1}{2}\|\ddt u^{n-1}_h\|^2_{L_2},
	\end{align*}
	and similarly
	\begin{align*}
	\tau b(u^n_h, \ddt u^n_h) \geq \frac{1}{2}\|u^n_h\|^2_{b} - \frac{1}{2}\|u^{n-1}_h\|^2_{b}.
	\end{align*}
	Similar bounds will be used repeatedly throughout the paper. Summing \eqref{energy_bounds_eq1} over $n$ gives
	\begin{align*}
	\frac{1}{2}\|\ddt u^n_h\|^2_{L_2} - \frac{1}{2}\|\ddt u^{1}_h\|^2_{L_2} + \sum_{j=2}^n \tau \|\ddt u^j_h\|^2_a &+ \frac{1}{2}\|u^n_h\|^2_{b} - \frac{1}{2}\|u^{1}_h\|^2_{b} \leq \sum_{j=2}^n \tau\|f^j\|_{H^{-1}}\|\ddt u^j_h\|_{H^1}.
	\end{align*}
	Using the equivalence of the norms \eqref{norm_equivalence}, Cauchy--Schwarz and Young's (weighted) inequality to subtract $\sum_{j=2}^n \tau\|\ddt u^j_h\|^2_{H^1}$ from both sides, we get exactly \eqref{energy_bound1}.
	
	The proof of \eqref{energy_bound2} is similar. We choose $v=\tau\ddt^2u^n_h$ in \eqref{eq:fullydiscrete} and sum over $n$ to get
	\begin{align*}
	\sum_{j=2}^n\tau\|\ddt^2 u^j_h\|^2_{L_2} + \frac{1}{2}\|\ddt u^n_h\|^2_a - \frac{1}{2}\|\ddt u^1_h\|^2_a &+ \sum_{j=2}^n\tau b(u^j_h, \ddt^2 u^j_h) \leq \sum_{j=2}^n \tau \|f^j\|_{L_2}\|\ddt^2 u^j_h\|_{L_2}.
	\end{align*}
	For the sum involving the bilinear form $b(\cdot,\cdot)$ we use summation by parts to get
	\begin{align*}
	\sum_{j=2}^n\tau b(u^j_h, \ddt^2 u^j_h) = \sum_{j=3}^n -\tau b(\ddt u^j_h, \ddt u^{j-1}_h) - b(u^2_h, \ddt u^1_h) + b(u^n_h, \ddt u^n_h). 
	\end{align*}
	Using \eqref{energy_bound1}, the equivalence of the norms \eqref{norm_equivalence}, and Young's weighted inequality we have
	\begin{align*}
	&\left|\sum_{j=3}^n  \tau b(\ddt u^j_h, \ddt u^{j-1}_h) + b(u^2_h, \ddt u^1_h) - b(u^n_h, \ddt u^n_h)\right| \\&\qquad \quad\leq C \sum_{j=3}^n \tau \|\ddt u^j_h\|^2_{H^1} + C(\|u^2_h\|^2_{H^1} + \|\ddt u^1_h\|^2_{H^1}) + C\|u^n_h\|^2_{H^1} + C_\epsilon\|\ddt u^n_h\|^2_{a} \\
	&\qquad \quad \leq C\sum_{j=2}^n \tau\|f^j\|^2_{H^{-1}} + C(\|\ddt u^1_h\|^2_{H^1} + \|u^1_h\|^2_{H^1}) + C_\epsilon\|\ddt u^n_h\|^2_{a}.
	\end{align*}
	Since $C_\epsilon$ can be made arbitrarily small, it can be kicked to the left hand side. Using that $\|f^j\|^2_{H^{-1}} \leq C\|f^j\|^2_{L_2}$ we deduce \eqref{energy_bound2}.

\end{proof}

\section{Generalized finite element method}
\label{sec:gfem}

This section is dedicated to the development of a multiscale method based on the framework of the standard LOD. First of all, we introduce some notation for the discretization. Let $V_H$ be a FE-space defined analogously to $V_h$ in previous section, but with larger mesh size $H > h$. Moreover, we assume that corresponding family of partitions $\{\mathcal{T}_H\}_{H>h}$ is, in addition to shape-regular, also quasi-uniform. Denote by $\mathcal{N}$ the set of interior nodes of $V_H$ and by $\lambda_x$ the standard hat function for $x\in \mathcal{N}$, such that $V_H = \text{span}(\{\lambda_x\}_{x\in\mathcal{N}})$. Finally, we make the assumption that $\mathcal{T}_h$ is a refinement of $\mathcal{T}_H$, such that $V_H \subseteq V_h$.

\subsection{Ideal method}

To define a generalized finite element method for our problem, we aim to construct a multiscale space $V_\ms$ of the same dimension as $V_H$, but with better approximation properties. For the construction of such a multiscale space, let $I_H:V_h \rightarrow V_H$ be an interpolation operator that has the projection property $I_H = I_H \circ I_H$ and satisfies 
\begin{equation}
    H^{-1}_K\|v-I_Hv\|_{L_2(K)} + \|\nabla I_Hv\|_{L_2(K)} \leq C_I \|\nabla v\|_{L_2(N(K))}, \quad \forall K\in \mathcal{T}_H, \ v\in V_h,
    \label{eq:intestimate}
\end{equation}
where $N(K) := \{K' \in \mathcal{T}_H : \overline{K'}\cap \overline{K} \neq \emptyset\}.$ Furthermore, for a shape-regular and quasi-uniform partition, the estimate \eqref{eq:intestimate} can be summed into the global estimate 
\begin{equation*}
    H^{-1}\|v-I_H\|_{L_2(\Omega)} + \|\nabla I_H v\|_{L_2(\Omega)} \leq C_\gamma \| \nabla v\|_{L_2(\Omega)},
\end{equation*}
where $C_\gamma$ depends on the interpolation constant $C_I$ and the shape regularity parameter defined as
\begin{equation*}
    \gamma := \max_{K\in \mathcal{T}_H} \gamma_K, \ \text{ where } \ \gamma_K = \frac{\text{diam}(B_K)}{\text{diam}(K)}.
\end{equation*}
Here $B_K$ denotes the largest ball inside $K$. A commonly used example of such an interpolant is $I_H = E_H \circ \Pi_H$, where $\Pi_H$ is the piecewise $L_2$-projection onto $P_1(\mathcal{T}_H)$, the space of functions that are affine on each triangle $K \in \mathcal{T}_H$, and $E_H : P_1(\mathcal{T}_H) \rightarrow V_H$ is an averaging operator that, to each free node $x\in \mathcal{N}$, assigns the arithmetic mean of corresponding function values on intersecting elements, i.e.
\begin{equation*}
    (E_H(v))(x) = \frac{1}{\text{card}\{K \in \mathcal{T}_H : x \in \overline{K}\}} \sum_{K \in \mathcal{T}_H : x \in \overline{K}} v\big|_K(x).
\end{equation*}
For more discussion regarding possible choices of interpolants, see e.g.\ \cite{EfficientImplementation} or \cite{expdecayoffs}.

Let the space $V_\f$ be defined by the kernel of the interpolant, i.e.
\begin{equation*}
    V_\f = \ker(I_H) = \{v \in V_h : I_Hv = 0\}.
\end{equation*}
That is, $V_\f$ is a finescale space in the sense that it captures the features that are excluded from the coarse FE-space. This consequently leads to the decomposition
\begin{equation*}
    V_h = V_H \oplus V_\f,
\end{equation*}
such that every function $v\in V_h$ has a unique decomposition $v = v_H + v_\f$, where $v_H \in V_H$ and $v_\f \in V_\f$.

In the case of the LOD method for the standard wave equation (see \cite{Wave}), one considers a Ritz-projection based solely on the $B$-coefficient to construct a multiscale space. Instead, the goal is to define a multiscale space based on the inner product $a(\cdot,\cdot) + \tau b(\cdot, \cdot)$ (for a fixed $\tau$) and add additional correction to account for the time-dependency. This particular choice of scalar product comes from the backward Euler time-stepping formulation and both simplifies the analysis and is more natural in the implementation. Another possibility is to choose $a(\cdot, \cdot)$ as scalar product. For $v\in V_H$, we consider the Ritz-projection $R_\f:V_H\rightarrow V_\f$ defined by
\begin{equation*}
    a(R_\f v, w) + \tau b(R_\f v, w) = a(v,w) + \tau b(v,w), \quad \forall w\in V_\f.
\end{equation*}
Using this projection, we may define the multiscale space $V_\ms := V_H - R_\f V_H$ such that
\begin{equation}\label{MSspace}
    V_h = V_\ms \oplus V_\f, \quad \text{ and } \quad a(v_\ms, v_\f) + \tau b(v_\ms, v_\f) = 0.
\end{equation}
Note that $\dim(V_\ms) = \dim(V_H)$, and hence we can view $V_\ms$ as a modified coarse space that contains finescale information of $A$ and $B$. Next, we may use the Ritz-projection to define the basis functions for the space $V_\ms$. For $x\in \mathcal{N}$, denote by $\phi_x := R_\f\lambda_x \in V_\f$ the solution to the (global) corrector problem
\begin{equation}
    a(\phi_x, w) + \tau b(\phi_x, w) = a(\lambda_x, w) + \tau b(\lambda_x,w), \quad \forall w\in V_\f.\label{eq:basisproj}
\end{equation}
We can now construct our basis for $V_\ms$ as $\{\lambda_x - \phi_x\}_{x\in \mathcal{N}}$ which includes the behavior of the coefficients. For an illustration of the Ritz-projected hat function, as well as the modified basis function for $V_\ms$, see Figure \ref{fig:newbasis}. 

\begin{figure}
	\centering
	\begin{subfigure}[b]{0.4\textwidth}
		\includegraphics[width=\textwidth]{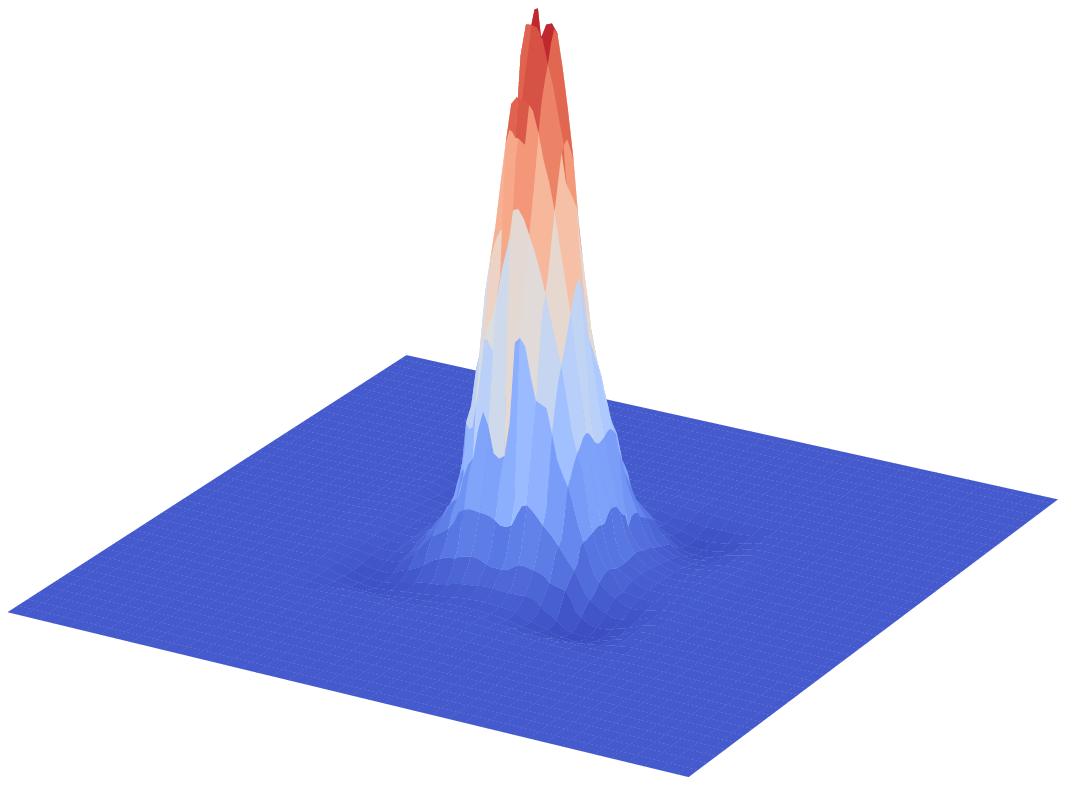}
		\caption{$\lambda_x - \phi_x$.}
		\label{fig:msbasis}
	\end{subfigure}
	~ 
	\begin{subfigure}[b]{0.4\textwidth}
		\includegraphics[width=\textwidth]{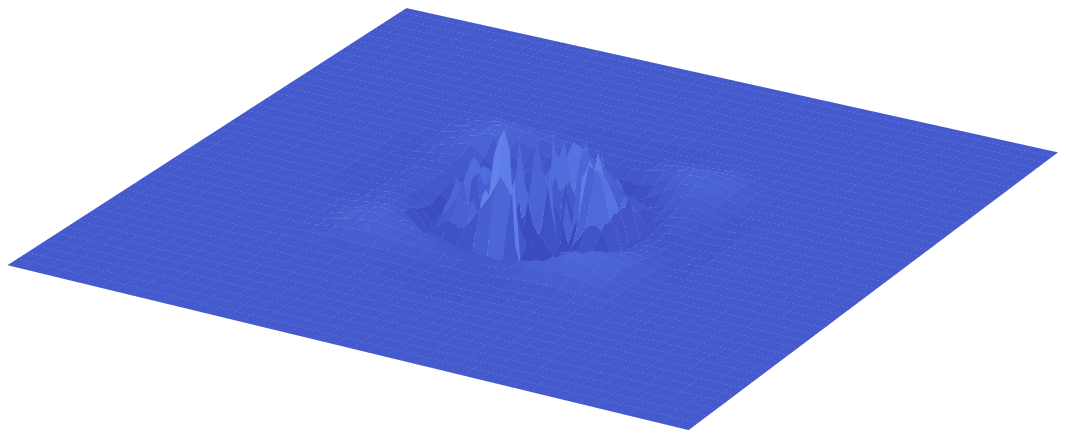}
		\caption{$\phi_x$.}
		\label{fig:corrbasis}
	\end{subfigure}
	\caption{The modified basis function $\lambda_x - \phi_x$ and the Ritz-projected hat function $\phi_x$.}\label{fig:newbasis}
\end{figure}

We may now formulate our ideal (but impractical) method. Since the solution space can be decomposed as $V_h = V_\ms \oplus V_\f$, the idea is to solve a coarse scale problem in $V_\ms$, and then add additional correction from a problem on the fine scale. The method reads: find $\ulod{n} = v^n + w^n$, where $v^n \in V_\ms$ and $w^n\in V_\f$ such that
\begin{alignat}{2}
\tau (\ddt^2v^n, z) + a(v^n, z) + \tau b(v^n, z) &= \tau(f^n,z) + a(\ulod{n-1}, z),& \quad &\forall z \in V_\ms, \label{eq_GFEM_full_1} \\
a(w^n, z) + \tau b(w^n, z) &= a(\ulod{n-1}, z), & & \forall z \in V_\f,\label{eq:GFEM_full_2}
\end{alignat}
for $n\geq 2$ with initial data $\ulod{0}= u^0_h \in V_\ms$ and $\ulod{1} = u^1_h \in V_\ms$. The initial data is chosen in $V_\ms$ to simplify the implementation of the finescale correctors. We further discuss this choice in Section \ref{sec:initialdata}.

\begin{remark}
	Note that in \eqref{eq:GFEM_full_2}, we do not take neither the source function nor the second derivative into account. This is because we can subtract an interpolant within the $L_2$-product, so that corresponding error converges at the same order as the method itself. Moreover, the $v^n$-part and $w^n$-part have been excluded from the bilinear form $a(\cdot,\cdot) + \tau b(\cdot, \cdot)$ in \eqref{eq_GFEM_full_1} and \eqref{eq:GFEM_full_2} respectively, due to the orthogonality between $V_\ms$ and $V_\f$.
\end{remark}

Note that the multiscale space $V_\ms$ is created using \eqref{eq:basisproj} with small $\tau$. Thus, the $A$-coefficient dominates the system for short times. Moreover, we note from \eqref{eq:GFEM_full_2} that for $N$ large enough, we reach a steady state so that $w^N \approx w^{N-1}$ and $v^N\approx v^{N-1}$. We get for $z\in V_\f$
\begin{align*}
a(w^N,z) + \tau b(w^N,z) &\approx a(\ulod{N},z) = a(v^N, z) + a(w^N, z) = -\tau b(v^N, z) + a(w^N, z),
\end{align*}
due to the orthogonality. Hence, by rearranging terms we have that
\begin{align*}
	b(v^N, z) + b(w^N, z) = b(\ulod{N}, z) \approx 0,
\end{align*}
which shows that the solution converges to a state where it is orthogonal with respect to $B$.

\subsection{Localized method}
\label{sec:loc}

The method we have considered so far is based on the global projection \eqref{eq:basisproj} onto the finescale space $V_\f$, which results in a large linear system that is expensive to solve. Moreover, the basis correctors yield a global support that makes the linear system \eqref{eq_GFEM_full_1} not sparse, but dense. Hence, we wish to localize the computations onto coarse grid patches in order to yield a sparse matrix system. 

To localize the corrector problem, we first introduce the patches to which the support of each basis function is to be restricted. For $\omega \subset \Omega$, let $N(\omega) := \{K\in \mathcal{T}_H : \overline{K}\cap \overline{\omega} \neq \emptyset\}$, and define a patch $N^k(\omega)$ of size $k$ as
\begin{align*}
N^1(\omega) &:= N(\omega), \\
N^k(\omega) &:= N(N^{k-1}(\omega)), \ \text{ for $k \geq 2$}.
\end{align*}
Given these coarse grid patches, we may restrict the finescale space $V_\f$ to them by defining 
\begin{equation*}
V_{\f, k}^\omega := \{v\in V_\f : \supp(v) \subseteq N^k(\omega)\},
\end{equation*}
for a subdomain $\omega \subset \Omega$. In particular, we will commonly use $\omega= T\in \mathcal{T}_H$ and $\omega= x\in \mathcal{N}$.

Next, define the element restricted Ritz-projection $R^T_\f$ such that $R^T_\f v\in V_\f$ is the solution to the system 
\begin{equation*}
a(R^T_\f v, z) + \tau b(R^T_\f v, z) = \int_T (A+\tau B)\nabla v\cdot \nabla z\, \mathrm{d}x, \quad \forall z\in V_\f.
\end{equation*}
Note that we may construct the global Ritz-projection as the sum
\begin{equation*}
R_\f v = \sum_{T\in \mathcal{T}_H} R^T_\f v.
\end{equation*}
For $k\in \mathbb{N}$, we may restrict the projection to a patch by letting $R^T_{\f,k}:V_H\rightarrow V_{\f, k}^T$ be such that $R^T_{\f,k}v \in V_{\f, k}^T$ solves
\begin{equation*}
a(R^T_{\f,k}v, z) + \tau b(R^T_{\f,k}v, z) = \int_T (A+\tau B)\nabla v\cdot \nabla z\, \mathrm{d}x, \quad \forall z\in V_{\f, k}^T.
\end{equation*}
By summation we yield the corresponding global version as
\begin{equation*}
R_{\f,k}v = \sum_{T\in \mathcal{T}_H} R^T_{\f,k}v. 
\end{equation*}
Finally, we may construct a localized multiscale space as $V_{\ms,k} := V_H - R_{\f,k}V_H$, spanned by $\{\lambda_x - R_{\f,k}\lambda_x\}_{x\in \mathcal{N}}$. 

In order to justify the act of localization, it is required that a corrector $\phi_x$ vanishes rapidly outside an area of its corresponding node $x$. Indeed, the following theorem from \cite{lodbook} shows that the corrector $\phi_x$ satisfy an exponential decay away from its node, making the localization procedure viable.
\begin{thm}
	There exists a constant $c\geq (8C_I\gamma (2+C_I))^{-1}$, that only depends on the mesh constant $\gamma$, such that for any $T\in \mathcal{T}_H$ and any $v\in H^1_0(\Omega)$ the solution $\phi \in V_\f$ of the variational problem
	\begin{equation*}
		\tilde{a}(\phi, w) = \int_T (\tilde{A}\nabla v) \cdot \nabla w\, \mathrm{d}x, \quad \forall w\in V_\f
	\end{equation*}
	satisfies
	\begin{equation*}
		\|\tilde{A}^{1/2}\nabla \phi\|_{L_2(\Omega\backslash N^k(T))} \leq \sqrt{2}\exp\big(-c\tfrac{\alpha_- + \tau \beta_-}{\alpha_+ + \tau\beta_+}k\big)\|\tilde{A}^{1/2}\nabla v\|_{L_2(T)}, \quad \forall k\in \mathbb{N},
	\end{equation*}
	where $\tilde{A} = A+\tau B$.
	\label{thm:expdecay}
\end{thm}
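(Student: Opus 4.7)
The plan is to follow the by-now classical cutoff-function argument for exponential decay of correctors in LOD, adapted to the weighted bilinear form $\tilde{a}(\cdot,\cdot)$ generated by $\tilde{A} = A + \tau B$. The key observation is that $\tilde{A}$ is uniformly elliptic with ellipticity bounds $\alpha_- + \tau\beta_-$ and $\alpha_+ + \tau\beta_+$, so that all the familiar estimates (Cauchy--Schwarz, Young, the interpolation inequality \eqref{eq:intestimate}) go through with constants depending only on this ratio. This is precisely what produces the factor $(\alpha_-+\tau\beta_-)/(\alpha_++\tau\beta_+)$ in the exponent $c$.

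Fix $k \geq 2$ and choose a cutoff $\eta \in V_H$ (a sum of nodal hat functions) with $\eta = 0$ on $N^{k-2}(T)$, $\eta = 1$ on $\Omega \setminus N^{k-1}(T)$, $0 \le \eta \le 1$, and $\|\nabla\eta\|_{L_\infty(K)} \lesssim H^{-1}$ on the annular ring $R := N^{k-1}(T)\setminus N^{k-2}(T)$. The test function $w := (1-I_H)(\eta\phi)$ lies in $V_\f = \ker(I_H)$ by the projection property of $I_H$, and the quasi-locality of $I_H$ combined with $\eta \equiv 0$ in a neighbourhood of $T$ implies that $w$ vanishes on $T$. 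Consequently the right-hand side of the corrector equation is zero, giving the Galerkin-type orthogonality $\tilde{a}(\phi, w) = 0$.

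Rewrite this as $\tilde{a}(\phi,\eta\phi) = \tilde{a}(\phi, I_H(\eta\phi))$ and expand the left-hand side via the product rule. The term $\int \eta\, \tilde{A}\nabla\phi\cdot\nabla\phi$ dominates $\|\tilde{A}^{1/2}\nabla\phi\|^2_{\Omega\setminus N^{k-1}(T)}$ because $\eta = 1$ there. The remaining contributions, namely the cross term $\int \phi\,\tilde{A}\nabla\phi\cdot\nabla\eta$ and the interpolant term $\tilde{a}(\phi, I_H(\eta\phi))$, are both supported in the ring $R$ (for the first because $\nabla\eta$ is, for the second because $I_H$ is quasi-local and $\eta\phi$ is supported outside $N^{k-2}(T)$). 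Estimating each by Cauchy--Schwarz, the interpolation bound $H^{-1}\|\eta\phi - I_H(\eta\phi)\|_{L_2} + \|\nabla I_H(\eta\phi)\|_{L_2} \lesssim \|\nabla(\eta\phi)\|_{L_2(R)}$, and the $L_\infty$-bound on $\nabla\eta$, one obtains
\begin{equation*}
\|\tilde{A}^{1/2}\nabla\phi\|^2_{\Omega\setminus N^{k-1}(T)} \le C\, \|\tilde{A}^{1/2}\nabla\phi\|^2_{R},
\end{equation*}
where $C$ depends on $C_I$, $\gamma$ and the ellipticity ratio $(\alpha_++\tau\beta_+)/(\alpha_-+\tau\beta_-)$.

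Since $\|\tilde{A}^{1/2}\nabla\phi\|^2_R = \|\tilde{A}^{1/2}\nabla\phi\|^2_{\Omega\setminus N^{k-2}(T)} - \|\tilde{A}^{1/2}\nabla\phi\|^2_{\Omega\setminus N^{k-1}(T)}$, this rearranges into the contraction
\begin{equation*}
\|\tilde{A}^{1/2}\nabla\phi\|^2_{\Omega\setminus N^{k-1}(T)} \le \tfrac{C}{1+C}\,\|\tilde{A}^{1/2}\nabla\phi\|^2_{\Omega\setminus N^{k-2}(T)},
\end{equation*}
with $\tfrac{C}{1+C} < 1$. Iterating this inequality $k$ times and starting from the global a priori bound $\|\tilde{A}^{1/2}\nabla\phi\|_\Omega \le \|\tilde{A}^{1/2}\nabla v\|_{L_2(T)}$ (obtained by testing the corrector problem with $\phi$ itself) produces the claimed estimate, with $c$ coming from $-\tfrac12 \log \tfrac{C}{1+C}$ and the factor $\sqrt{2}$ absorbing one iteration step in the book-keeping. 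The main technical obstacle is the careful tracking of the constant $C$ through the ring estimates: one must verify explicitly that $C_I$, the shape-regularity $\gamma$ and the ellipticity ratio enter in such a way that the lower bound $c \geq (8 C_I \gamma(2+C_I))^{-1}$ emerges in the exponent, which requires balancing the cross term and the interpolant term with a single weighted Young inequality rather than two independent ones.
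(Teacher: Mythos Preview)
The paper does not actually supply a proof of this theorem; it is quoted verbatim from \cite{lodbook} and only invoked later, in particular inside the proof of Theorem~\ref{thm:expdecay2}. Your proposal reproduces precisely the standard cut-off argument from that reference (and from \cite{MalqvistPeterseim}), so in spirit you are doing exactly what the cited proof does.

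Two small bookkeeping remarks, which do not affect the overall correctness but explain why the reference iterates over blocks of four layers (note the $\lfloor k/4\rfloor$ in the paper's use of the theorem) rather than single layers. First, your claim that $w=(1-I_H)(\eta\phi)$ vanishes on $T$ needs $\eta\equiv 0$ on $N(T)$, not just on $T$, because $I_H$ spreads support by one element layer; so the argument starts at $k\ge 3$ rather than $k\ge 2$. Second, the term $\tilde a(\phi, I_H(\eta\phi))$ is not supported in the single ring $R$ but in a thicker annulus: using $I_H\phi=0$ one sees that $I_H(\eta\phi)$ vanishes both where $\eta\phi=\phi$ (plus one layer) and where $\eta\phi=0$ (plus one layer), so its support is $N^{k}(T)\setminus N^{k-3}(T)$. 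This is why the contraction estimate in the reference takes the form $\vertiii{\phi}^2_{\Omega\setminus N^k}\le \delta\,\vertiii{\phi}^2_{\Omega\setminus N^{k-4}}$ and iteration gives a power $\lfloor k/4\rfloor$; your single-layer contraction as written would not close. With this adjustment, and with the weighted Young inequality you mention to track the explicit constant, the argument goes through as in \cite{lodbook}.
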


With the space $V_{\ms,k}$ defined, we are able to localize the computations on the coarse scale system in \eqref{eq_GFEM_full_1} by replacing the multiscale space by its localized counterpart. It remains to localize the computations of the finescale system in \eqref{eq:GFEM_full_2}, which equivalently can be written as
\begin{equation*}
		a(\ddt w^n, z) + b(w^n, z) = \frac{1}{\tau} a(v^{n-1}, z).
\end{equation*}
We replace the right hand side by its localized version $v^{n-1}_k \in V_{\ms, k}$ and note that $v^{n-1}_k = \sum_{x\in \mathcal{N}} \alpha^{n-1}_x (\lambda_x - R_{\f,k}\lambda_x)$. Thus, we seek our localized finescale solution as $w^n_k = \sum_{x\in \mathcal{N}}w^n_{k,x}$, where $w^n_{k,x} \in V_{\f,k}^x$ solves
\begin{equation}
	a(\ddt w^n_{k,x}, z) + b(w^n_{k,x}, z) = \frac{1}{\tau} a(\alpha^{n-1}_x(\lambda_x - R_{\f,k} \lambda_x), z), \quad \forall z\in V_{\f,k}^x,
	\label{eq:wnkx}
\end{equation}
so that the computation of this equation is localized to a patch surrounding the node $x\in \mathcal{N}$. We introduce the functions $\xi^l_{k,x} \in V_{\f,k}^x$ as solution to the parabolic equation
\begin{equation}
a(\ddt \xi^l_{k,x}, z) + b(\xi^l_{k,x}, z) = a(\frac{1}{\tau}\chi_{(0,\tau)}(\lambda_x - R_{\f,k}\lambda_x), z), \quad \forall z\in V_{\f,k}^x,
\label{eq:xink}
\end{equation}
with initial value $\xi^0_{k,x} = 0$, and where $\chi_{(0,\tau)}$ is an indicator function on the interval $(0,\tau)$. We claim that $w^n_{k,x} = \sum_{l=1}^n \alpha_x^{n-l}\xi^l_{k,x}$ is the solution to \eqref{eq:wnkx}. This follows as for all $z\in V_{\f,k}^x$
\begin{align*}
	a(\ddt w^n_{k,x}, z&) + b(w^n_{k,x}, z) = a(\ddt \sum_{l=1}^n \alpha_x^{n-l}\xi^l_{k,x}, z) + b(\sum_{l=1}^n \alpha_x^{n-l}\xi^l_{k,x}, z) \\
	&= \sum_{l=2}^n \alpha_x^{n-l} \big( a(\ddt \xi^l_{k,x}, z) + b(\xi^l_{k,x}, z) \big) + \alpha_x^{n-1} \big( a(\ddt \xi^1_{k,x}, z) + b(\xi^1_{k,x}, z) \big) \\
	&= 0 + a(\alpha_x^{n-1}(\lambda_x - R_{\f, k}\lambda_x), z).
\end{align*}
With the localized computations established, the GFEM reads: find $\ulodk{n} = v^n_k + w^n_k$, where $v^n_k = \sum_{x\in \mathcal{N}} \alpha^n_x(\lambda_x - R_{\f,k}\lambda_x) \in V_{\ms,k}$ solves
\begin{equation}
	\tau (\ddt^2v^n_k, z) + a(v^n_k, z) + \tau b(v^n_k, z) = \tau(f^n,z) + a(\ulodk{n-1}, z), \quad \forall z \in V_{\ms, k}, \label{eq_GFEM_full_loc_1}
\end{equation}
and $w^n_k = \sum_{x\in \mathcal{N}} \sum_{l=1}^n \alpha^{n-l}_x \xi^l_{k,x}$, where $\xi^l_{k,x}\in V_{\f,k}^x$ solves \eqref{eq:xink}.

To justify the fact that we localize the finescale equation, we require a result similar to that of Theorem \ref{thm:expdecay}, but for the functions $\{\xi^l_{x}\}_{l=1}^N$. We finish this section about localization by proving that these functions satisfy the exponential decay required for the localization procedure to be viable.

\begin{thm}
	\label{thm:expdecay2}
	For any node $x\in \mathcal{N}$, let $\xi^n_x \in V_\f$ be the solution to 
	\begin{equation*}
	a(\ddt \xi^n_x, z) + b(\xi^n_x, z) = a(\frac{1}{\tau}\chi_{(0,\tau)}(\lambda_x - R_{\f}\lambda_x), z), \quad \forall z\in V_{\f},
	\end{equation*}
	with initial value $\xi^0_{x} = 0$. Then there exist constants $c>0$ and $C>0$ such that for any $k\geq 1$
	\begin{equation*}
	\|\xi^n_x\|_{H^1(\Omega \backslash N^k(x))} \leq 	Ce^{-ck}\|\lambda_x\|_{H^1},
	\end{equation*}
	for sufficiently small time step $\tau$.
\end{thm}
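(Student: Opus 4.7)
The plan is to rewrite the problem as an elliptic recursion, establish exponential spatial decay for the first iterate via a standard LOD cutoff argument, and then propagate that decay through the time iteration using the strict temporal contraction that $V_\f$ inherits from the backward Euler step.

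Multiplying the defining equation by $\tau$ and recalling $\xi^0_x = 0$, the system reads $\tilde{a}(\xi^n_x, z) = a(\xi^{n-1}_x, z)$ for $n \geq 2$ and $\tilde{a}(\xi^1_x, z) = a(\lambda_x - \phi_x, z)$ for all $z \in V_\f$, where $\phi_x := R_\f \lambda_x$. Introducing the operator $S : V_h \to V_\f$ defined by $\tilde{a}(Sv, z) = a(v, z)$, we have $\xi^n_x = S^n(\lambda_x - \phi_x)$. Using \eqref{norm_equivalence}, for any $v \in V_\f$ one has $(1 + \tau c_b/C_a)\|v\|_a^2 \leq \vertiii{v}^2$, whence $\vertiii{Sv} \leq \|v\|_a \leq \rho\, \vertiii{v}$ with $\rho = (1 + \tau c_b/C_a)^{-1/2} < 1$; this strict contraction is the temporal stability that will be needed later.

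For the base case $n=1$, I exploit the $\tilde{a}$-orthogonality between $V_\ms$ and $V_\f$ to rewrite the source as $a(\lambda_x - \phi_x, z) = -\tau b(\lambda_x - \phi_x, z)$ for $z \in V_\f$. Since $\lambda_x$ is locally supported in $N(x)$ and $\phi_x$ decays exponentially by Theorem \ref{thm:expdecay}, the source is concentrated near $x$ up to an $e^{-ck}$ tail. A standard LOD cutoff argument, using a plateau function $\eta_k$ vanishing on $N^{k-1}(x)$ and equal to $1$ on $\Omega \setminus N^k(x)$, test function $z = (1 - I_H)(\eta_k \xi^1_x) \in V_\f$, the interpolation estimate \eqref{eq:intestimate}, and a Caccioppoli-style absorption iteration over the layer index, yields $\vertiii{\xi^1_x}_{\Omega \setminus N^k(x)} \leq C e^{-ck} \|\lambda_x\|_{H^1}$.

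For $n \geq 2$ the decay is propagated by induction. Testing $\tilde{a}(\xi^n_x, z) = a(\xi^{n-1}_x, z)$ with a cutoff-based $z$ analogous to the base case, Cauchy--Schwarz bounds the right-hand side by $\|\xi^{n-1}_x\|_{a, \Omega \setminus N^{k-r}(x)}\,\vertiii{z}$ for a fixed $r$ depending only on the layer width of $\eta_k$ and on $I_H$; the induction hypothesis controls this tail, the annulus commutators are absorbed by the layer iteration, and a final conversion from $\vertiii{\cdot}$ to $\|\cdot\|_{H^1}$ via \eqref{norm_equivalence} yields the claim. The main obstacle is keeping the rate $c$ uniform in $n$: a naive induction loses $O(1)$ coarse layers per time step and accumulates an $e^{O(n)}$ factor that would blow up over the $N = T/\tau$ iterations. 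The assumption that $\tau$ is sufficiently small is precisely what ensures $1 - \rho \gtrsim \tau$ is large enough for the temporal contraction $\rho^n$ to absorb this per-step loss, leaving a single positive rate $c$ valid for all $n \leq N$.
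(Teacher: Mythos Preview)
Your inductive step contains a genuine gap: the temporal contraction $\rho^n$ with $\rho = (1+\tau c_b/C_a)^{-1/2}$ cannot compensate for the per-step layer loss. Since $1-\rho \sim \tau$, one has $\rho^n \sim e^{-c\tau n}$, which for $n \leq N = T/\tau$ is only bounded below by a fixed constant $e^{-cT}$. The per-step multiplicative loss from shedding $r$ coarse layers is a factor $\kappa^{-r}$ that is \emph{independent of $\tau$}, so after $n$ steps the accumulated constant is of order $\kappa^{-rn}$, i.e.\ $e^{c'T/\tau}$ at the final time. Making $\tau$ small therefore makes the mismatch worse, not better; the direction of your last sentence is reversed.

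The paper closes this gap by a different mechanism that your proposal does not use. In the Caccioppoli recursion at step $n$, the source term is $M_1 = a(\xi^{n-1}_x, z)$ with $z=(1-I_H)(\eta\,\xi^n_x)\in V_\f$ supported in $\Omega\setminus N^{k-3}(x)$. Instead of bounding this directly by Cauchy--Schwarz (which yields no smallness), one writes $a = \tilde a - \tau b$ and uses the equation $\tilde a(\xi^{n-1}_x, z) = a(\xi^{n-2}_x, z)$, unwinding back to $a(\lambda_x-\phi_x,z)=-\tau b(\lambda_x-\phi_x,z)$ on $\mathrm{supp}\,z$. This produces an explicit factor of $\tau$ in front of $M_1$. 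After squaring and Young's inequality this becomes a factor $\tau^2$ on the source in the layer recursion, so the constant multiplying $\kappa^{\lfloor k/4\rfloor}$ grows from step $n-1$ to step $n$ only by $(1+C_1\tau^2)$, giving a geometric series $\sum_{i<n}(C_1\tau^2)^i$ that is bounded uniformly in $n$ once $C_1\tau^2<1$. It is this algebraic $\tau$-extraction from the source term, not a global energy contraction, that makes the induction close.
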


\begin{proof}
	
	First, we analyze the problem for the first time step, which when multiplied by $\tau$ can be written as
	\begin{equation}
	a(\xi^1_x,z) + \tau b(\xi^1_x,z) = a(\lambda_x - \phi_x, z), \quad \forall z\in V_\f,
	\label{eq:test}
	\end{equation}
	where $\phi_x= R_\f\lambda_x$. We denote $\tilde{a} = a+\tau b$ such that $\tilde{a}(\phi_x, z) = \tilde{a}(\lambda_x, z)$ for all $z\in V_\f$. Furthermore we use the energy norm $\vertiii{\cdot} := \sqrt{\tilde{a}(\cdot, \cdot)}$, and by $\vertiii{\cdot}_D$ we denote the restriction of the norm onto a domain $D$. As seen in the proof of Theorem 4.1 in \cite{lodbook}, the result in Theorem \ref{thm:expdecay} can be written as
	\begin{equation*}
	\vertiii{\phi_x}_{\Omega\backslash N^k(x)} \leq C_\phi\mu^{\lfloor k/4 \rfloor}\vertiii{\lambda_x},
	\end{equation*}
	for some $\mu < 1$. Moreover we define the cut-off function $\eta_k \in V_H$ by
	\begin{equation*}
	\eta_k := \begin{cases}
	1, \ \text{in $\Omega\backslash N^{k+1}(x)$}, \\
	0, \ \text{in $N^k(x)$},
	\end{cases}
	\end{equation*}
	for $x\in \mathcal{N}$. Now let $\nu = \eta_{k-3}$. Then we have that
	\begin{align*}
	\supp(\nu) &= \Omega \backslash N^{k-3}(x), \\
	\supp(\nabla \nu) &= N^{k-2}(x)\backslash N^{k-3}(x).
	\end{align*}
	With this setting, we note that
	\begin{align*}
	\vertiii{\xi^1_x}^2_{\Omega\backslash N^k} &\leq \int_\Omega \nu \tilde{A} \nabla \xi^1_x \cdot \nabla \xi^1_x\, \mathrm{d}x = \int_\Omega \tilde{A} \nabla \xi^1_x \cdot \nabla(\nu \xi^1_x)\, \mathrm{d}x - \int_\Omega \tilde{A}\nabla \xi^1_x \cdot \xi^1_x\nabla \nu\, \mathrm{d}x \\
	&\leq \underbrace{\left|\int_\Omega \tilde{A} \nabla \xi^1_x \cdot \nabla(1-I_H)(\nu \xi^1_x)\, \mathrm{d}x \right|}_{=:M_1} + \underbrace{\left|\int_\Omega \tilde{A} \nabla \xi^1_x \cdot \nabla I_H(\nu \xi^1_x)\, \mathrm{d}x \right|}_{=:M_2} \\
	& \qquad \qquad + \underbrace{\left| \int_\Omega \tilde{A}\nabla \xi^1_x \cdot \xi^1_x\nabla \nu\, \mathrm{d}x \right|}_{=:M_3},
	\end{align*}
	where we have denoted $\tilde{A} = A+\tau B$. We now proceed to estimate the terms $M_1$, $M_2$ and $M_3$ separately. For $M_1$, we use the problem \eqref{eq:test} with $z=(1-I_H)(\nu \xi^1_x) \in V_\f$ to get
	\begin{align*}
	M_1 &= \left|\int_\Omega A \nabla (\lambda_x - \phi_x) \cdot \nabla(1-I_H)(\nu \xi^1_x)\, \mathrm{d}x \right| \\ &= \left|\tau \int_{\Omega\backslash N^{k-3}} B \nabla (\phi_x) \cdot \nabla(1-I_H)(\nu \xi^1_x)\, \mathrm{d}x \right|,
	\end{align*}
	where we have used the $\tilde{a}$-orthogonality between $V_\ms$ and $V_\f$, that the integral is zero on $\supp(\lambda_x)$, and that the support of the remaining integrand is $\Omega\backslash N^{k-3}$. Thus, we get that
	\begin{align*}
	M_1 &\leq \tau \frac{\beta_+}{\alpha_-} \vertiii{\phi_x}_{\Omega\backslash N^{k-3}}\vertiii{\xi^1_x}_{\Omega\backslash N^{k-3}} \leq \tau\frac{\beta_+}{\alpha_-} C_\phi \mu^{\lfloor \frac{k-3}{4} \rfloor}\vertiii{\lambda_x}\vertiii{\xi^1_x}_{\Omega\backslash N^{k-4}}.
	\end{align*}
	Moreover, by similar calculations as in the proof of Theorem 4.1 in \cite{lodbook}, from $M_2$ and $M_3$ we get
	\begin{equation*}
	M_2, M_3 \leq \tilde{C}\vertiii{\xi^1_x}^2_{N^{k}\backslash N^{k-4}},
	\end{equation*}
	for a constant $\tilde{C} > 0$. In total, for $\varepsilon \in (0,1)$, we find that
	\begin{align*}
	\vertiii{\xi^1_x}^2_{\Omega\backslash N^k} &\leq \tau\frac{\beta_+}{\alpha_-} C_\phi \mu^{\lfloor \frac{k-3}{4} \rfloor}\vertiii{\lambda_x}\vertiii{\xi^1_x}_{\Omega\backslash N^{k-4}} + \tilde{C}\vertiii{\xi^1_x}^2_{N^{k}\backslash N^{k-4}} \\
	&\leq \frac{(\beta_+ C_\phi)^2}{\alpha^2_{-}\varepsilon}\tau^2\mu^{2\lfloor \frac{k-3}{4} \rfloor}\vertiii{\lambda_x}^2 + \varepsilon\vertiii{\xi^1_x}^2_{\Omega\backslash N^{k-4}} \\
	&\qquad \qquad + \tilde{C}(\vertiii{\xi^1_x}^2_{\Omega\backslash N^{k-4}} - \vertiii{\xi^1_x}^2_{\Omega\backslash N^k}).
	\end{align*}
	Let $\delta := (\varepsilon + \tilde{C})(1+\tilde{C})^{-1} < 1$, and set $\kappa = \max(\delta, \mu) < 1$. Then, by rearranging the terms we get the inequality
	\begin{align*}
	\vertiii{\xi^1_x}^2_{\Omega \backslash N^{k}} \leq \frac{(\beta_+ C_\phi)^2}{\alpha^2_{-}\varepsilon(1+\tilde{C})}\tau^2\kappa^{2\lfloor \frac{k-3}{4} \rfloor}\vertiii{\lambda_x}^2 + \kappa \vertiii{\xi^1_x}^2_{\Omega \backslash N^{k-4}}.
	\end{align*}
	Repeating the estimate, we end up with
	\begin{align*}
	\vertiii{\xi^1_x}^2_{\Omega \backslash N^{k}} \leq \kappa^{\lfloor k/4 \rfloor}\vertiii{\xi^1_x}^2_{\Omega} + \frac{(\beta_+ C_\phi)^2}{\alpha^2_{-}\varepsilon(1+\tilde{C})}\vertiii{\lambda_x}^2\sum_{i=0}^{\lfloor k/4 \rfloor-1} \tau^2\kappa^{i}\kappa^{2\lfloor \frac{k-3-4i}{4} \rfloor}.
	\end{align*}
	We proceed by estimating $\vertiii{\xi^1_x}_\Omega$. By choosing $z = \xi^1_x$ in \eqref{eq:test} we get
	\begin{align*}
	\vertiii{\xi^1_x}^2 \leq \vertiii{\lambda_x - \phi_x}\vertiii{\xi^1_x} \leq \vertiii{\lambda_x} \vertiii{\xi^1_x},
	\end{align*}
	since
	\begin{align*}
	\vertiii{\lambda_x - \phi_x}^2 &= \tilde{a}(\lambda_x-\phi_x, \lambda_x-\phi_x) \leq \vertiii{\lambda_x -\phi_x} \vertiii{\lambda_x}.
	\end{align*}
	Moreover, for $i=0,1,2,...,\lfloor k/4 \rfloor-1$, we note that
	\begin{align}
	\kappa^{i+2\lfloor \frac{k-3-4i}{4} \rfloor} \leq  \kappa^{\lfloor k/4\rfloor-1+2\lfloor \frac{k-3-4(k/4-1)}{4} \rfloor} = \kappa^{\lfloor k/4\rfloor-1+2\lfloor \frac{1}{4} \rfloor} = \kappa^{\lfloor k/4\rfloor-1}
	\label{eq:sumest}
	\end{align}
	so in total we have the estimate
	\begin{equation*}
	\vertiii{\xi^1_x}_{\Omega\backslash N^{k}} \leq \sqrt{1+C_0\tau^2}\kappa^{\frac{1}{2}\lfloor k/4\rfloor}\vertiii{\lambda_x}, \quad \text{ with } \quad C_0 = \frac{(\beta_+ C_\phi)^2\kappa^{-1}}{\alpha^2_{-}\varepsilon(1+\tilde{C})}(\lfloor k/4 \rfloor - 1).
	\end{equation*}
	Recall that this is for the first time step. In next time step, we consider the problem 
	\begin{equation*}
	a(\xi^2_x,z) + \tau b(\xi^2_x,z) = a(\xi^1_x, z), \quad \forall z\in V_\f.
	\end{equation*}
	As for the first time step, we split the estimate into the similar integrals $M_1$, $M_2$, and $M_3$, and get
	\begin{equation*}
	M_1 \leq \tau\frac{\beta_+}{\alpha_-}\vertiii{\xi^1_x}_{\Omega\backslash N^{k-3}}\vertiii{\xi^2_x}_{\Omega \backslash N^{k-3}} \leq \tau \frac{\beta_+}{\alpha_-} \sqrt{1+C_0\tau^2}\kappa^{\frac{1}{2}\lfloor \frac{k-3}{4} \rfloor}\vertiii{\lambda_x} \vertiii{\xi^2_x}_{\Omega \backslash N^{k-4}},
	\end{equation*}
	while $M_2$ and $M_3$ remain the same. In total, we get the estimate
	\begin{align*}
	(1+\tilde{C})\vertiii{\xi^2_x}^2_{\Omega \backslash N^k} \leq \frac{\beta_+^2}{\alpha^2_{-}\varepsilon}\tau^2(1+C_0\tau^2)\kappa^{\lfloor \frac{k-3}{4} \rfloor} \vertiii{\lambda_x}^2 + (\varepsilon + \tilde{C})\vertiii{\xi^2_x}^2_{\Omega \backslash N^{k-4}}.
	\end{align*}
	Once again, by letting $\delta = (\varepsilon + \tilde{C})/(1+\tilde{C})$ and since $\delta \leq \kappa$, we get
	\begin{align*}
	\vertiii{\xi^2_x}^2_{\Omega \backslash N^k} &\leq \frac{\beta^2_+}{\alpha^2_{-}\varepsilon(1+\tilde{C})}\tau^2(1+C_0\tau^2)\kappa^{\lfloor \frac{k-3}{4} \rfloor} \vertiii{\lambda_x}^2 + \kappa \vertiii{\xi^2_x}^2_{\Omega \backslash N^{k-4}} \\
	&\leq \kappa^{\lfloor k/4 \rfloor} \vertiii{\lambda_x}^2 +\frac{\beta^2_+}{\alpha^2_{-}\varepsilon(1+\tilde{C})}(1+C_0\tau^2)\sum_{i=0}^{\lfloor k/4\rfloor - 1} \tau^2 \kappa^{i} \kappa^{\lfloor \frac{k-3-4i}{4} \rfloor}.
	\end{align*}
	Once again we use \eqref{eq:sumest} to conclude that
	\begin{align*}
	\vertiii{\xi^2_x}_{\Omega \backslash N^k} &\leq \sqrt{1+C_1\tau^2(1+C_0\tau^2)}\kappa^{\frac{1}{2}\lfloor k/4 \rfloor}\vertiii{\lambda_x} \\ &=\sqrt{1+C_1\tau^2 + C_1\tau^2 C_0\tau^2}\kappa^{\frac{1}{2}\lfloor k/4 \rfloor}\vertiii{\lambda_x},
	\end{align*}
	where 
	\begin{equation*}
	C_1 = \frac{\beta^2_+\kappa^{-1}}{\alpha^2_{-}\varepsilon(1+\tilde{C})}(\lfloor k/4\rfloor - 1).
	\end{equation*}
	Inductively, we get for arbitrary time step $n$ the estimate
	\begin{equation*}
	\vertiii{\xi^n_x}_{\Omega \backslash N^k} \leq \kappa^{\frac{1}{2}\lfloor k/4 \rfloor}\vertiii{\lambda_x} \sqrt{\sum_{i=0}^{n-1}(C_1\tau^2)^i +(C_1\tau^2)^nC_0\tau^2}.
	\end{equation*}
	Since $\kappa^{\frac{1}{2}\lfloor k/4 \rfloor} \leq  Ce^{-ck}$ for some $c > 0$ and $C>0$, and since the energy norm is equivalent to the $H^1$-norm, the theorem holds.
	
\end{proof}

\begin{remark}
	Note that the constant that appears in the final inequality converges to 
	\begin{equation*}
	\frac{1}{1-C_1\tau^2},
	\end{equation*}
	which means that the constant behaves nicely for sufficiently small time steps. More specifically, for time steps
	\begin{equation*}
	\tau \leq \sqrt{\frac{\alpha^2_{-}\varepsilon\kappa(1+\tilde{C})}{\beta^2_+(\lfloor k/4\rfloor - 1)}}.
	\end{equation*}
\end{remark}

%

\section{Error estimates}
\label{sec:errorestimates}

In this section we derive error estimates of the ideal method \eqref{eq_GFEM_full_1}-\eqref{eq:GFEM_full_2}. The additional error due to localization can be controlled in terms of the localization parameter $k$. This is further discussed in Remark \ref{rmk:loc}. We begin by considering an auxiliary problem.

\subsection{Auxiliary problem}

The auxiliary problem is defined as the standard variational formulation for the strongly damped wave equation, but we exclude the second order time derivative. Moreover, we let the starting time $t = t_0$ be general and set the time discretization to $t = t_0 < t_1 < ...< t_N = T$. Thus, the auxiliary problem is to find $\Z{n}\in V_h$ for $n=1,...,N$, such that
\begin{equation}
a(\ddt \Z{n}, v) + b(\Z{n}, v) = (f^n,v), \quad \forall v\in V_h,
\label{fullydisc_simple}
\end{equation}
with initial value $\Z{0} \in V_\ms$. Equivalently, multiply \eqref{fullydisc_simple} by $\tau$ and we may consider
\begin{equation}
a(\Z{n}, v) + \tau b(\Z{n}, v) = \tau (f^n,v) + a(\Z{n-1}, v), \quad \forall v\in V_h.
\label{fullydisc_simple2}
\end{equation}
Existence of a solution to this problem is guaranteed by Lax--Milgram. For simplicity, we make the assumption that the initial data for the damped wave equation \eqref{eq:fullydiscrete} is already in the multiscale space $V_\ms$, such that $$u^0_h=\ulod{0} \in V_\ms, \quad \, u^1_h=\ulod{1} \in V_\ms.$$ For general initial data we refer to Section \ref{sec:initialdata} below. Furthermore, to limit the technical details in the proof we have chosen to analyze the error in the $L_2(H^1)$-norm instead of the pointwise (in time) $H^1$-norm.

The solution space can be decomposed as $V_h = V_\ms \oplus V_\f$, such that the solution can be written as $\Z{n} = v^n + w^n$ where $v^n \in V_\ms$ and $w^n \in V_\f$. If we insert this into the system in \eqref{fullydisc_simple2} and consider test functions $z\in V_\ms$, the left hand side becomes
\begin{align*}
a(\Z{n},z) + \tau b(\Z{n}, z) 
&= a(v^n, z) + \tau b(v^n, z),
\end{align*}
where we have used the orthogonality between $V_\ms$ and $V_\f$ with respect to $a(\cdot, \cdot) + \tau b(\cdot, \cdot)$. Likewise, if test functions $z\in V_\f$ are considered, the left hand side becomes
\begin{align*}
a(\Z{n}, z) + \tau b(\Z{n}, z) = a(w^n, z) + \tau b(w^n, z).
\end{align*}
With these findings, we define the approximation to the auxiliary problem as to find $\Zlod{n} = v^n + w^n$, where $v^n \in V_\ms$ and $w^n \in V_\f$ such that
\begin{alignat}{2}
a(v^n, z) + \tau b(v^n, z) &= \tau(f^n,z) + a(\Zlod{n-1}, z),& \quad &\forall z \in V_\ms,\label{GFEM_simple_1} \\
a(w^n, z) + \tau b(w^n, z) &= a(\Zlod{n-1}, z), & & \forall z \in V_\f,\label{GFEM_simple_2}
\end{alignat}
with initial data $\Zlod{0} \in V_\ms$. Note that if $f=0$, then $\Z{n} = \Zlod{n}$ for every $n$, meaning that the method reproduces $\Z{n}$ exactly. For the auxiliary problem, we prove the following error estimates.

\begin{thm}\label{simple_problem_err}
	Let $\Z{n}$ be the solution to \eqref{fullydisc_simple} and $\Zlod{n}$ the solution to \eqref{GFEM_simple_1}-\eqref{GFEM_simple_2}. Assume that $\Zlod{0} - \Z{0} = 0$, then the error is bounded by
	\begin{align} 
	\|\Z{n} - \Zlod{n}\|_{H^1} &\leq CH\sum_{j=1}^n\tau\|f^{j}\|_{L_2}. \label{simple_problem_err1}
	\end{align}
	If $f^n \in L_2(\Omega)$, for $n \geq 0$, then we have
	\begin{align}
	\sum_{j=1}^n \tau \|\Z{j} - \Zlod{j}\|^2_{L_2} &\leq CH^2\sum_{j=1}^n \tau\|f^j\|^2_{L_2}  , \label{simple_problem_err2}
	\end{align}
	and if $f^n=\ddt g^n$, for some $\{g^n\}_{n=0}^N$ such that  $g^n \in V_h$, then
	\begin{align}
	\sum_{j=1}^n \tau \|\Z{j} - \Zlod{j}\|^2_{L_2} &\leq CH^2\Bigg(\sum_{j=1}^n \tau \|g^j\|^2_{L_2} + \|g^0\|^2_{L_2}\Bigg), \label{simple_problem_err4}
	\end{align}
	where C does not depend on the variations in $A$ or $B$.
\end{thm}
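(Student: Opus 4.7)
The plan is to consolidate \eqref{GFEM_simple_1}--\eqref{GFEM_simple_2} into a single error identity and then to prove \eqref{simple_problem_err1} by an energy argument and \eqref{simple_problem_err2}--\eqref{simple_problem_err4} via discrete duality. Writing any $v \in V_h$ uniquely as $v = v_\ms + v_\f$ in the $\tilde a$-orthogonal decomposition \eqref{MSspace} and adding \eqref{GFEM_simple_1} and \eqref{GFEM_simple_2}, one finds $a(\ddt \Zlod{j}, v) + b(\Zlod{j}, v) = (f^j, v_\ms)$ for all $v \in V_h$. Subtracting from \eqref{fullydisc_simple} and setting $e^j := \Z{j} - \Zlod{j}$ yields the error equation
\[
a(\ddt e^j, v) + b(e^j, v) = (f^j, v_\f), \quad \forall v \in V_h,
\]
which drives the entire proof.

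For \eqref{simple_problem_err1} I would test with $v = e^j$, bound $a(\ddt e^j, e^j) \ge (2\tau)^{-1}(\|e^j\|_a^2 - \|e^{j-1}\|_a^2)$, and use the finescale interpolation property $\|v_\f\|_{L_2} \le CH\|\nabla v_\f\|_{L_2} \le CH\|\nabla v\|_{L_2}$ (valid since $I_H v_\f = 0$ and the $\tilde a$-projection is $H^1$-stable). This gives $\|e^j\|_a^2 - \|e^{j-1}\|_a^2 \le 2\tau CH\|f^j\|_{L_2}\|e^j\|_a$. Writing $E^j := \|e^j\|_a$ and factoring the left-hand side as $(E^j - E^{j-1})(E^j + E^{j-1})$, a short two-case argument (treating $E^j \ge E^{j-1}$ and $E^j < E^{j-1}$ separately) yields $E^j \le E^{j-1} + 2\tau CH\|f^j\|_{L_2}$; telescoping with $E^0 = 0$ and invoking norm equivalence gives \eqref{simple_problem_err1}.

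For \eqref{simple_problem_err2} I would introduce the discrete backward dual, $\phi^{n+1}=0$ and
\[
a(\phi^j - \phi^{j+1}, v) + \tau b(\phi^j, v) = \tau(e^j, v), \quad \forall v \in V_h, \quad j = n,\ldots,1.
\]
Discrete summation by parts (using $e^0 = 0$ and $\phi^{n+1}=0$) together with the error equation tested against $\phi^j$ produces the identity $\sum_j \tau \|e^j\|^2_{L_2} = \sum_j \tau(f^j, \phi_\f^j)$. Combining $\|\phi_\f^j\|_{L_2} \le CH\|\nabla\phi^j\|_{L_2}$ with a standard energy estimate for the dual (testing with $\phi^j$ and absorbing $\|\phi^j\|_{L_2}$ into $\|\phi^j\|_b$ via Poincar\'e) yields $\sum_j \tau \|\nabla \phi^j\|^2 \le C\sum_j \tau\|e^j\|^2_{L_2}$, and Cauchy--Schwarz then closes the loop and produces \eqref{simple_problem_err2}.

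For \eqref{simple_problem_err4} I would apply Abel summation to $\sum_j \tau(f^j, \phi_\f^j) = \sum_j (g^j - g^{j-1}, \phi_\f^j)$, yielding the boundary terms $(g^n, \phi_\f^n)$ and $-(g^0, \phi_\f^1)$ together with the interior sum $-\sum_{j=1}^{n-1}(g^j, \phi_\f^{j+1} - \phi_\f^j)$. The interior sum is bounded via $\|\phi_\f^{j+1}-\phi_\f^j\|_{L_2} \le CH\|\nabla(\phi^{j+1}-\phi^j)\|_{L_2}$ combined with an energy estimate on the forward time-difference $(\phi^{j+1}-\phi^j)/\tau$ obtained by testing the dual equation against it, using the uniform pointwise bound $\|\phi^j\|_{H^1} \le C(\sum_k \tau\|e^k\|^2_{L_2})^{1/2}$ that follows from the backward energy estimate. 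The term $(g^0, \phi_\f^1)$ contributes the $\|g^0\|_{L_2}$ factor via the same pointwise $H^1$ bound on $\phi^1$. The main technical obstacle is the endpoint $(g^n, \phi_\f^n)$, which naively would add a spurious $\|g^n\|_{L_2}^2$ contribution not present on the right-hand side of \eqref{simple_problem_err4}; the rescue is that $\phi^{n+1}=0$ collapses the dual at $j=n$ to $\tilde a(\phi^n, v) = \tau(e^n, v)$, giving the sharper bound $\vertiii{\phi^n} \le C\tau\|e^n\|_{L_2}$ and hence $|(g^n, \phi_\f^n)| \le CH\tau\|g^n\|_{L_2}\|e^n\|_{L_2} \le CH(\sum_j \tau\|g^j\|^2_{L_2})^{1/2}(\sum_j \tau\|e^j\|^2_{L_2})^{1/2}$. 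Summing the three contributions and dividing through by $(\sum_j\tau\|e^j\|^2_{L_2})^{1/2}$ yields \eqref{simple_problem_err4}.
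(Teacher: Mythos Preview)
Your proposal is correct and follows essentially the same route as the paper: a consolidated error equation, an energy argument for \eqref{simple_problem_err1}, and a backward discrete dual problem combined with summation by parts for \eqref{simple_problem_err2}--\eqref{simple_problem_err4}. The only notable tactical difference is in \eqref{simple_problem_err4}: the paper keeps the $j=n$ contribution inside the sum $\sum_{j=1}^n\tau(-\tilde\partial_t x^{j-1}_\f,g^j)$ and bounds it uniformly via the time-derivative energy estimate \eqref{dual_energy_timeder}, so no separate treatment of a $(g^n,\phi^n_\f)$ boundary term is needed; your sharper bound $\vertiii{\phi^n}\le C\tau\|e^n\|_{L_2}$ is valid but unnecessary.
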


\begin{proof}
	Since $\Z{n} \in V_h$ there are $\bar v^n \in V_\ms$ and $\bar w^n \in V_\f$ such that $\Z{n} = \bar v^n + \bar w^n$. Let $e^n = \Z{n} - \Zlod{n}$, and consider
	\begin{align*}
	\vertiii{e^n}^2 &:=a(e^n,e^n) + \tau b(e^n,e^n) \\
	&= \tau(f^n,e^n) + a(\Z{n-1},e^n)- a(v^n,e^n) - \tau b(v^n, e^n) - a(w^n,e^n) - \tau b(w^n,e^n).
	\end{align*}
	For $v^n \in V_\ms$ we have due to the orthogonality and \eqref{GFEM_simple_1}
	\begin{align*}
	a(v^n,e^n) + \tau b(v^n, e^n) &= a(v^n,\bar v^n - v^n) + \tau b(v^n, \bar v^n - v^n) \\&= \tau(f^n,\bar v^n  - v^n) + a(\Zlod{n-1},\bar v^n - v^n).
	\end{align*}
	Similarly, for $w^n \in V_\f$ we use the orthogonality and \eqref{GFEM_simple_2} to get
	\begin{align*}
	a(w^n,e^n) + \tau b(w^n, e^n) &
	= a(\Zlod{n-1}, \bar w^n - w^n).
	\end{align*}
	Hence,
	\begin{align*}
	\vertiii{e^n}^2 &= \tau(f^n,e^n) + a(\Z{n-1},e^n)- \tau(f^n,\bar v^n  - v^n) \\ &\qquad \qquad - a(\Zlod{n-1},\bar v^n - v^n) - a(\Zlod{n-1}, \bar w^n - w^n)\\
	&= \tau(f^n,\bar w^n - w^n) + a(\Z{n-1} - \Zlod{n-1},e^n).
	\end{align*}
	The first term can be bounded by using the interpolation operator $I_H$
	\begin{align*}
	\tau|(f^n,\bar w^n - w^n)| &\leq \tau\|f^n\|_{L_2}\|\bar w^n - w^n - I_H(\bar w^n - w^n)\|_{L_2} \\ &\leq CH\tau\|f^n\|_{L_2}\|\bar w^n - w^n\|_{H^1} \\& \leq  CH\tau\|f^n\|_{L_2}\|e^n\|_{H^1}  \\ &\leq  CH\tau\|f^n\|_{L_2}\vertiii{e^n}.
	\end{align*}
	For the second term we note that $\Z{n-1} - \Zlod{n-1} = e^{n-1}$ so that
	\begin{align*}
	\vertiii{e^n} &\leq CH\tau\|f^n\|_{L_2} + \vertiii{e^{n-1}}. 
	\end{align*}
	Using this bound repeatedly and $e^0=0$ we get
	\begin{align*}
	\vertiii{e^n} &\leq CH\sum_{j=1}^n\tau\|f^{j}\|_{L_2}. 
	\end{align*}
	This concludes the proof since $\|e^n\|_{H^1} \leq C\vertiii{e^n}$.
	
	To prove the remaining bounds in $L_2$-norm, we define the forward difference operator $\tilde \partial_t x^n = (x^{n+1}-x^n)/\tau$ and consider the dual problem: find $x^j_h \in V_h$ for $j=n-1,...,0$, such that $x^n_h = 0$ and
	\begin{align}\label{dual_problem}
	a(-\tilde \partial_t x^j_h,z) + b(x^j_h,z) = (e^{j+1},z), \quad \forall z \in V_h.
	\end{align}
	Note that this problem moves backwards in time. By choosing $z = x^j_h$ in \eqref{dual_problem} and performing a classical energy argument, we deduce
	\begin{align}\label{dual_energy}
	\|x^j_h\|^2_{H^1} + \sum_{k=j}^n\tau \|x^k_h\|^2_{H^1} \leq C\sum_{k=j+1}^n\tau\|e^k\|^2_{L_2}.
	\end{align}
	Similarly, by choosing $z=-\tilde \partial_t x^j_h$, we achieve
	\begin{align}\label{dual_energy_timeder}
	\|x^j_h\|^2_{H^1} + \sum_{k=j}^n\tau \|\tilde \partial_t x^k_h\|^2_{H^1} \leq C\sum_{k=j+1}^n\tau\|e^k\|^2_{L_2}.
	\end{align}
	Now, use  \eqref{dual_problem} to get
	\begin{align*}
	\sum_{j=1}^n \tau \|e^{j}\|^2_{L_2} = \sum_{j=1}^n \tau a(-\tilde \partial_t x^{j-1}_h,e^j) + \tau b(x^{j-1}_h,e^j).
	\end{align*}
	Summation by parts gives 
	\begin{align}
	\sum_{j=1}^n \tau \|e^{j}\|^2_{L_2} &= \sum_{j=1}^n \tau a(-\tilde \partial_t x^{j-1}_h,e^j) + \tau b(x^{j-1}_h,e^j) \label{dual_problem_sum}\\&= \sum_{j=1}^n \tau a(x^{j-1}_h,\ddt e^j) + \tau b(x^{j-1}_h,e^j),\notag
	\end{align}
	where we have used $x^n=e^0=0$. Furthermore, we use the equations \eqref{fullydisc_simple} and \eqref{GFEM_simple_1}, and the orthogonality in \eqref{MSspace}, to show that the following Galerkin orthogonality holds for $z_\ms \in V_\ms$
	\begin{align}\label{Galerkin_Orthogonality_simple}
	a(\ddt e^j, z_\ms) + b(e^j, z_\ms) &= a(\ddt \Z{j}, z_\ms) + b(\Z{j}, z_\ms) - \frac{1}{\tau}a(v^j, z_\ms) \\&\qquad \qquad - b(v^j, z_\ms) + \frac{1}{\tau}a(\Zlod{j-1},z_\ms)  \\&= (f^j,z_\ms) - (f^j,z_\ms) = 0.\notag
	\end{align}
	Let $x^j_h = x_\ms + x_\f$, for some $x_\ms \in V_\ms,\,  x_\f \in V_\f$. Using the orthogonality \eqref{Galerkin_Orthogonality_simple} and the equations \eqref{GFEM_simple_2} and \eqref{fullydisc_simple} we deduce
	\begin{align*}
	\sum_{j=1}^n \tau a(x^{j-1}_h,\ddt e^j) + \tau b(x^{j-1}_h,e^j) &= \sum_{j=1}^n \tau a(x^{j-1}_\f,\ddt e^j) + \tau b(x^{j-1}_\f,e^j)\\&= \sum_{j=1}^n \tau a(x^{j-1}_\f,\ddt \Z{j}) + \tau b(x^{j-1}_\f,\Z{j})\\
	&= \sum_{j=1}^n \tau (x^{j-1}_\f,f^j).
	\end{align*}
	If $f^j \in L_2(\Omega)$, then we may subtract $I_H x_\f = 0$ and use \eqref{eq:intestimate} to achieve
	\begin{align*}
	\sum_{j=1}^n \tau (x^{j-1}_\f,f^j) &\leq CH \sum_{j=1}^n \tau \|x^{j-1}_\f\|_{H^1}\|f^j\|_{L_2} \\&\leq  CH \Bigg(\sum_{j=1}^n \tau \|x^{j-1}_\f\|^2_{H^1}\Bigg)^{1/2}\Bigg(\sum_{j=1}^n \tau \|f^j\|^2_{L_2}\Bigg)^{1/2}.
	\end{align*}
	Note that $\vertiii{x^{j-1}_\f}^2 + \vertiii{x^{j-1}_\ms}^2 \leq \vertiii{x^{j-1}_h}^2$. Hence the energy estimate \eqref{dual_energy} can now be used to achieve \eqref{simple_problem_err2}.	
	If $f^j = \ddt g^j$ one may use summation by parts to achieve
	\begin{align*}
	\sum_{j=1}^n \tau \|e^{j}\|^2_{L_2} &= \sum_{j=1}^n \tau (x^{j-1}_\f,\ddt g^j) \leq \sum_{j=1}^n \tau (- \tilde \partial_t x^{j-1}_\f,g^j) - (x^0_\f,g^0) \\& \leq CH\sum_{j=1}^n \tau \|\tilde \partial_t x^{j-1}_\f\|_{H^1}\|g^j\|_{L_2} + CH\|x^0_\f\|_{H^1}\|g^0\|_{L_2},
	\end{align*}
	where we have used $x^n_\f=x^n_h=0$. Using \eqref{dual_energy_timeder} we conclude \eqref{simple_problem_err4}.
\end{proof}

\begin{remark}
	The bound in \eqref{simple_problem_err2} is not of optimal order, but it is useful in the error analysis.
\end{remark}

The next lemma gives error estimates for the discrete time derivative of the error. In the analysis of the (full) damped wave equation we use $g = \ddt u_h$, see Lemma~\ref{RitzVolterra_err_timederivative}. If the initial data is nonzero we expect $\ddt g^1$ below to be of order $t^{-1}$ in $L_2$-norm. A detailed explanation of this is given below. Hence, we have a blow up close to zero due to low regularity of the initial data. Therefore, we need to multiply the error by $t_j$. This is similar to the parabolic case for nonsmooth initial data see, e.g., \cite{Thomee06}.  


\begin{lemma}\label{simple_problem_err_timederivative}
	Let $\Z{n}$ be the solution to \eqref{fullydisc_simple} and $\Zlod{n}$ the solution to \eqref{GFEM_simple_1}-\eqref{GFEM_simple_2}. 
	Assume $\Zlod{0} - \Z{0} =0$.	If $\ddt f^n \in L_2(\Omega)$, for $n \geq 1$, then
	\begin{align}
	\label{simple_problem_err_timederivative1}\sum_{j=2}^n \tau \|\ddt (\Z{j} - \Zlod{j})\|^2_{L_2} &\leq CH^2\Bigg(\sum_{j=2}^n \tau \|\ddt f^j\|^2_{L_2} + \|f^1\|^2_{L_2}\Bigg)
	\end{align}
	and if $f^n=\ddt g^n$, for some $\{g^n\}_{n=0}^N$, such that  $g^n \in V_h$, then
	\begin{align}
	\label{simple_problem_err_timederivative2}\sum_{j=2}^n \tau \|\ddt (\Z{j} - \Zlod{j})\|^2_{L_2}\ &\leq CH^2\Bigg(\sum_{j=2}^n \tau \|\ddt g^j\|^2_{L_2} + \|\ddt g^1\|^2_{L_2}\Bigg)
	\end{align}
	and, in addition, the following bound holds
	\begin{align}
	\label{simple_problem_err_timederivative3}\sum_{j=2}^n &\tau t_j^2\|\ddt (\Z{j} - \Zlod{j})\|^2_{L_2} \leq CH^2\Bigg(\sum_{j=2}^n \tau  \|\ddt g^j\|^2_{L_2} + t_1^2\|\ddt g^1\|^2_{L_2}\Bigg),
	\end{align}
	where C does not depend on the variations in $A$ or $B$.
\end{lemma}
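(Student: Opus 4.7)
The strategy is to transport the duality argument from the proof of Theorem \ref{simple_problem_err} to the discrete time derivatives. Differencing \eqref{fullydisc_simple} at levels $n$ and $n-1$ shows that $\ddt \Z{n}$ solves an auxiliary problem of the same form with source $\ddt f^n$ for $n\ge 2$, and the same differencing applied to \eqref{GFEM_simple_1}--\eqref{GFEM_simple_2} shows that $\ddt \Zlod{n} = \ddt v^n + \ddt w^n$ is the corresponding GFEM approximation. The Galerkin orthogonality \eqref{Galerkin_Orthogonality_simple} then differentiates to
\begin{equation*}
a(\ddt^2 e^j, z_\ms) + b(\ddt e^j, z_\ms) = 0 \quad \text{for } j \ge 2,\ z_\ms \in V_\ms,
\end{equation*}
while the orthogonality \eqref{MSspace}, together with \eqref{fullydisc_simple2} and \eqref{GFEM_simple_2}, yields on the finescale space $a(\ddt e^j, z_\f) + b(e^j, z_\f) = (f^j, z_\f)$, which in turn differences to $a(\ddt^2 e^j, z_\f) + b(\ddt e^j, z_\f) = (\ddt f^j, z_\f)$ for $z_\f \in V_\f$ and $j \ge 2$. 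The new feature compared with Theorem \ref{simple_problem_err} is that the initial error $\ddt e^1 = e^1/\tau$ is no longer zero; by \eqref{simple_problem_err1} it satisfies $\|\ddt e^1\|_{H^1} \le CH\|f^1\|_{L_2}$, and this term will generate a boundary contribution that must be absorbed carefully.

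Next I would introduce a weighted dual problem: find $x^j_h \in V_h$ for $j=n-1,\dots,1$ with $x^n_h=0$ and
\begin{equation*}
a(-\tilde\partial_t x^j_h, z) + b(x^j_h, z) = t_{j+1}^p\,(\ddt e^{j+1}, z), \quad \forall z \in V_h,
\end{equation*}
with $p=0$ for \eqref{simple_problem_err_timederivative1}--\eqref{simple_problem_err_timederivative2} and $p=2$ for \eqref{simple_problem_err_timederivative3}. The standard energy manipulations used to derive \eqref{dual_energy}--\eqref{dual_energy_timeder} give $\|x^1_h\|_{H^1}^2 + \sum \tau \|x^j_h\|_{H^1}^2 + \sum \tau \|\tilde\partial_t x^j_h\|_{H^1}^2 \le C \sum_{j=2}^n \tau t_j^p \|\ddt e^j\|_{L_2}^2$, with $C$ depending on $T$ but not on $A$ or $B$. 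Testing the dual with $\ddt e^{j+1}$ and applying Abel summation, retaining the boundary term at $j=1$ since $\ddt e^1 \ne 0$, I obtain the key identity
\begin{equation*}
\sum_{j=2}^n \tau t_j^p \|\ddt e^j\|_{L_2}^2 = a(x^1_h, \ddt e^1) + \tau \sum_{k=2}^n \bigl[a(x^{k-1}_h, \ddt^2 e^k) + b(x^{k-1}_h, \ddt e^k)\bigr].
\end{equation*}
Splitting $x^{k-1}_h = x^{k-1}_\ms + x^{k-1}_\f$, the Galerkin orthogonality annihilates the $V_\ms$ part, and the finescale equation collapses the remainder to $\tau \sum_{k=2}^n (\ddt f^k, x^{k-1}_\f)$.

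With this in hand, the proof of \eqref{simple_problem_err_timederivative1} proceeds routinely: bound $(\ddt f^k, x^{k-1}_\f) = (\ddt f^k, x^{k-1}_\f - I_H x^{k-1}_\f)$ by $CH\|\ddt f^k\|_{L_2}\|x^{k-1}_\f\|_{H^1}$ through \eqref{eq:intestimate}, estimate $|a(x^1_h, \ddt e^1)|$ by $CH\|f^1\|_{L_2}\|x^1_h\|_{H^1}$, then apply Cauchy--Schwarz with the dual energy estimate and absorb via Young's inequality. For \eqref{simple_problem_err_timederivative2}, substitute $\ddt f^k = \ddt^2 g^k$ and perform an additional summation by parts mirroring the one used to derive \eqref{simple_problem_err4}:
\begin{equation*}
\tau \sum_{k=2}^n (\ddt^2 g^k, x^{k-1}_\f) = -(\ddt g^1, x^1_\f) - \tau \sum_{k=2}^n (\ddt g^k, \tilde\partial_t x^{k-1}_\f),
\end{equation*}
where $x^n=0$ is used to eliminate the upper boundary. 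Each resulting term is controlled via the $I_H$-interpolation trick together with the dual energy estimates; the contribution $|a(x^1_h, \ddt e^1)|\le CH\|\ddt g^1\|_{L_2}\|x^1_h\|_{H^1}$ supplies the $CH^2\|\ddt g^1\|_{L_2}^2$ term.

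The hard part will be the weighted estimate \eqref{simple_problem_err_timederivative3}, because the naive bound on $a(x^1_h, \ddt e^1)$ produces $CH^2\|\ddt g^1\|_{L_2}^2$, whereas one must reach the sharper $CH^2 t_1^2\|\ddt g^1\|_{L_2}^2$. The extra factor $\tau^2$ is gained by exploiting the precise algebraic structure of $e^1$. From the orthogonality \eqref{MSspace} and the equations \eqref{fullydisc_simple2}, \eqref{GFEM_simple_1}, and \eqref{GFEM_simple_2} taken at $n=1$, it follows that $\tilde a(e^1, v) = 0$ for all $v\in V_\ms$ and $\tilde a(e^1, v) = \tau(\ddt g^1, v)$ for all $v\in V_\f$. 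Combined with $\ddt e^1 = e^1/\tau$, this gives
\begin{equation*}
a(x^1_h, \ddt e^1) = \tfrac{1}{\tau}\tilde a(x^1_h, e^1) - b(x^1_h, e^1) = (\ddt g^1, x^1_\f) - b(x^1_h, e^1),
\end{equation*}
and the $(\ddt g^1, x^1_\f)$ contribution cancels exactly against the $-(\ddt g^1, x^1_\f)$ produced by the preceding summation by parts. What remains is $-b(x^1_h, e^1)$, which the bound $\|e^1\|_{H^1} \le CH\tau\|\ddt g^1\|_{L_2}$ from \eqref{simple_problem_err1} controls by $CH\tau\|x^1_h\|_{H^1}\|\ddt g^1\|_{L_2}$, yielding the desired $CH^2 t_1^2\|\ddt g^1\|_{L_2}^2$ after Young's inequality. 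Recognising this exact cancellation is the delicate point in the entire argument.
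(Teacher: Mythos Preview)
Your argument for \eqref{simple_problem_err_timederivative1} and \eqref{simple_problem_err_timederivative2} coincides with the paper's: the same dual problem, the same summation by parts producing the boundary term $a(x^1_h,\ddt e^1)$, and the same use of \eqref{simple_problem_err1} to control it. One small imprecision: the dual energy estimate you state should read $\le C\sum_j \tau t_j^{2p}\|\ddt e^j\|_{L_2}^2$ rather than $t_j^{p}$; for $p=2$ the extra factor $t_j^2\le T^2$ is harmless and is absorbed into your $T$-dependent constant, so nothing is lost.

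For \eqref{simple_problem_err_timederivative3} you take a genuinely different route. The paper places only a linear weight $t_{j+1}$ in the dual and tests with $t_j\ddt e^j$; the summation by parts then produces an additional commutator term $\sum_j \tau a(x^{j-1}_h,\ddt e^{j-1})$ coming from the discrete derivative of the weight, which is handled by a further Abel summation and the pointwise bound \eqref{simple_problem_err1}. The boundary contribution $a(x^1_h,t_1\ddt e^1)=a(x^1_h,e^1)$ is then estimated directly. Your approach instead loads the full weight $t_{j+1}^2$ into the dual, so no commutator appears, and you recover the missing factor $t_1$ at the boundary by the algebraic identity $a(x^1_h,\ddt e^1)=(\ddt g^1,x^1_\f)-b(x^1_h,e^1)$ together with the exact cancellation against the $-(\ddt g^1,x^1_\f)$ produced by the second summation by parts. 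This is correct and arguably cleaner: it replaces the paper's two auxiliary summation-by-parts steps by a single cancellation, at the mild cost of a cruder (but still admissible) $T$-dependent constant in the dual energy bound.
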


\begin{proof}
	The proof of \eqref{simple_problem_err_timederivative1} is similar to \eqref{simple_problem_err2}. Let $e^j = \Z{j} - \Zlod{j}$ and define the dual problem
	\begin{align}\label{dual_problem_timederivative}
	a(-\tilde \partial_t x^j_h,z) + b(x^j_h,z) = (\ddt e^{j+1},z), \quad \forall z \in V_h, \ j = n-1,...,0,
	\end{align}
	with $x^n = 0$. Choosing $z=\ddt e^{j+1}$ and performing summation by parts we deduce
	\begin{align}\label{simple_error_eq2}
	\sum_{j=2}^n \tau \|\ddt e^{j}\|^2_{L_2} &= \sum_{j=2}^n \tau a(-\tilde \partial_t x^{j-1}, \ddt e^j) + \tau b(x^{j-1}_h,\ddt e^j) \\&=\sum_{j=2}^n \tau a(x^{j-1}_h,\ddt^2 e^j) + \tau b(x^{j-1}_h,\ddt e^j) + a(x^1_h,\ddt e^1),\notag
	\end{align}
	where we used that $x^n = 0$. Following the same argument as for \eqref{simple_problem_err4}, but with a difference quotient, we arrive at
	\begin{align*}
	\sum_{j=2}^n \|\ddt e^j\|^2_{L_2}&= \sum_{j=2}^n \tau a(x^{j-1}_h,\ddt^2 e^j) + \tau b(x^{j-1}_h,\ddt e^j) + a(x^1_h,\ddt e^1) 
	\\&= \sum_{j=2}^n \tau (x^{j-1}_\f,\ddt f^j) + a(x^1_h,\ddt e^1).
	\end{align*}
	Using $e^0=0$, we deduce
	\begin{align*}
	a(x^1_h,\ddt e^1) = \frac{1}{\tau}a(x^1_h,e^1) \leq \frac{C}{\tau}H\|x^1_h\|_{H^1}\tau \|f^1\|_{L_2} \leq CH\|x^1_h\|_{H^1} \|f^1\|_{L_2},
	\end{align*}
	and with $\ddt f^j \in L_2(\Omega)$ we get
	\begin{align*}
	\sum_{j=2}^n \|\ddt e^j\|^2_{L_2}	&\leq CH\sum_{j=2}^n \tau \|x^{j-1}_\f\|_{H^1}\|\ddt f^j\|_{L_2} + CH\|x^1_h\|_{H^1} \|f^1\|_{L_2},
	\end{align*}
	and \eqref{simple_problem_err_timederivative1} follows by using an energy estimate of $x^{j}_h$ similar to \eqref{dual_energy}, but with $\ddt e^j$ on the right hand side.
	
	If $f^j =\ddt g^j$ we proceed as for \eqref{simple_problem_err4} to achieve 
	\begin{align*}
	\sum_{j=2}^n \tau (x^{j-1}_\f,\ddt f^j) \leq CH\sum_{j=2}^n \tau \|\tilde \partial_t x^{j-1}_\f\|_{H^1}\|\ddt g^j\|_{L_2} + CH\|x^1_\f\|_{H^1}\|\ddt g^1\|_{L_2}
	\end{align*}
	and \eqref{simple_problem_err_timederivative2} follows by using energy estimates similar to \eqref{dual_energy_timeder}.
	
	For \eqref{simple_problem_err_timederivative3} we consider the dual problem
	\begin{align}\label{dual_problem_t}
	a(-\tilde \partial_t x^j_h,z) + b(x^j_h,z) = (t_{j+1}\ddt e^{j+1},z), \quad \forall z \in V_h, \ j = n-1,...,0.
	\end{align}
	A simple energy estimate shows
	\begin{align}\label{dual_problem_est}
	\|x^j_h\|^2_{H^1} + \sum_{k=j}^{n-1}\tau\|\tilde\partial_t x^k_h\|^2_{H^1} \leq C\sum_{k=j}^{n-1}\tau t^2_{k+1}\|\ddt e^{k+1}\|^2_{L_2}, \quad j=0,...,n-1.
	\end{align}
	Now choosing $z=t_{j+1}\ddt e^{j+1}$ in \eqref{dual_problem_t} and performing summation by parts gives
	\begin{align}\label{simple_error_eq3}
	\sum_{j=2}^n \tau t^2_j\|\ddt e^{j}\|^2_{L_2} &= \sum_{j=2}^n \tau a(-\tilde \partial_t x^{j-1}_h, t_j\ddt e^j) + \tau b(x^{j-1}_h,t_j\ddt e^j) \\&= \sum_{j=2}^n \big(\tau a(x^{j-1}_h,t_j\ddt^2 e^j) + \tau b(x^{j-1}_h,t_j\ddt e^j) \notag\\&\quad+ a(x^{j-1}_h, (t_j - t_{j-1})\ddt e^{j-1})\big)  + a(x^1_h,t_1\ddt e^1).\notag
	\end{align}
	The first two terms of the sum can be handled similarly to \eqref{simple_problem_err_timederivative2},
	\begin{align*}
	\sum_{j=2}^n \tau a(x^{j-1}_h,t_j\ddt^2 e^j) + \tau b(x^{j-1}_h,t_j\ddt e^j) = \sum_{j=2}^n \tau (x^{j-1}_\f, t_j\ddt f^j).
	\end{align*}
	Now, using summation by parts we achieve
	\begin{align*}
	\sum_{j=2}^n &\tau (x^{j-1}_\f, t_j\ddt f^j) = \sum_{j=2}^n \tau \big((-\tilde \partial_t x^{j-1}_\f, t_jf^j) + ( x^{j-1}_\f, f^j)\big) + (x^1_\f,t_1 f^1) \\ &\leq CH\Bigg( \sum_{j=2}^n \tau \big(t_j\|\tilde \partial_t x^{j-1}_\f\|_{H^1}\|f^j\|_{L_2} + \|x^{j-1}_\f\|_{H^1} \|f^j\|_{L_2} \big) + t_1\|x^1_\f\|_{H^1}\|f^1\|_{L_2}\Bigg)
	\end{align*}
	where we can use \eqref{dual_problem_est}. Note that in the first term we can use the (crude) bound  $t_j^2\leq t_n^2$ and let the constant $C$ depend on $T$. We get
	\begin{align*}
	\sum_{j=2}^n \tau (x^{j-1}_\f, t_j\ddt f^j) &\leq CH\Bigg( \sum_{j=1}^{n}\tau t^2_j\|\ddt e^{j}\|^2_{L_2}\Bigg)^{1/2} \Bigg(\Bigg( \sum_{j=2}^n \tau  \|f^j\|^2_{L_2} \Bigg)^{1/2} + t_1\|f^1\|_{L_2}\Bigg).
	\end{align*}
	For the third term in \eqref{simple_error_eq3}, we use $t_j-t_{j-1}=\tau$ and once again perform summation by parts to get
	\begin{align*}
	\sum_{j=2}^n \tau a(x^{j-1}_h,\ddt e^{j-1}) = \sum_{j=2}^n \tau a(- \tilde\partial_t x^{j-1}_h, e^{j-1}),
	\end{align*}
	where we have used $x^n_h=e^0=0$. Combining \eqref{dual_problem_est} and \eqref{simple_problem_err1} we get
	\begin{align*}
	\sum_{j=2}^n \tau a(- \tilde\partial_t x^{j-1}_h, e^{j-1}) &\leq C \max_{j=1,...,n}\|e^j\|_{H^1} \Bigg(\sum_{j=2}^n \tau \Bigg)^{1/2}\Bigg(\sum_{j=2}^n \tau \|\tilde\partial_t x^{j-1}_h\|^2_{H^1} \Bigg)^{1/2} \\& \leq CH \sum_{j=1}^n \tau\|f^j\|_{L_2} \Bigg(\sum_{j=1}^{n}\tau t_j^2\|\ddt e^{j}\|^2_{L_2}\Bigg)^{1/2}.
	\end{align*}
	For the last term in \eqref{simple_error_eq3} we use \eqref{dual_problem_est} and \eqref{simple_problem_err1} for $n=1$ to achieve
	\begin{align*}
	a(x^1_h,t_1\ddt e^1) = a(x^1_h,e^1) \leq CH\Bigg( \sum_{j=1}^{n}\tau t^2_j\|\ddt e^{j}\|^2_{L_2}\Bigg)^{1/2}t_1\|f^1\|_{L_2},
	\end{align*}
	and \eqref{simple_problem_err_timederivative3} follows by letting $f^j = \ddt g^j$.
	
	
\end{proof}

\subsection{The damped wave equation}

For the error analysis of the full damped wave equation we shall make use of the projection corresponding to the auxiliary problem. For $u^n_h \in V_h$, let $X^n = X^n_v + X^n_w \in V_h$ with $X^n_v\in V_\ms$ and $X^n_w\in V_\f$ such that
\begin{alignat}{2}
a(X^n_v - u^n_h,z) + \tau b(X^n_v - u^n_h, z) &= a(X^{n-1} - u^{n-1}_h, z), \quad &&\forall z\in V_\ms, \label{eq:Xproj1} \\
a(X^n_w, z) + \tau b(X^n_w, z) &= a(X^{n-1}, z), \quad &&\forall z\in V_\f.\label{eq:Xproj2}
\end{alignat}
Note that since $u^n_h$ solves \eqref{eq:fullydiscrete}, the system \eqref{eq:Xproj1}-\eqref{eq:Xproj2} is equivalent to
\begin{alignat}{2}
a(X^n_v,z) + \tau b(X^n_v, z) &= \tau(f^n - \ddt^2u^n_h, z) + a(X^{n-1}, z), \quad &&\forall z\in V_\ms, \\
a(X^n_w, z) + \tau b(X^n_w, z) &= a(X^{n-1}, z), \quad &&\forall z\in V_\f.
\end{alignat}
That is, we may view $u^n_h$ and $X^n$ as the solution and approximation to the auxiliary problem with source data $f^n-\ddt^2u^n_h$. We deduce following lemma. 

\begin{lemma}\label{RitzVolterra_err}
	Let $u^n_h$ be the solution to \eqref{eq:fullydiscrete} and $X^n$ the solution to \eqref{eq:Xproj1}-\eqref{eq:Xproj2}. The error satisfies the following bounds
	\begin{align}
	\|X^n - u^n_h\|_{H^1} &\leq CH\sum_{j=2}^n\tau\|f^{j} - \ddt^2 u^j_h\|_{L_2}, \quad n\geq 2,\label{RitzVolterra_err_1}\\
	\sum_{j=2}^n \tau \|X^n - u^n_h\|^2_{L_2} &\leq CH^2\Bigg(\sum_{j=2}^n \tau (\|f^j\|^2_{L_2} + \|\ddt u^j_h\|^2_{L_2}) + \|\ddt u^1_h\|^2_{L_2}\Bigg), \quad n\geq 2,  \label{RitzVolterra_err_2}
	\end{align}
	where C does not depend on the variations in $A$ or $B$.
\end{lemma}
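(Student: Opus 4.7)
The plan is to recognize that $u^n_h$ itself satisfies the auxiliary problem from Section~4.1 with source term $F^n := f^n - \ddt^2 u^n_h$, and that $X^n$ is precisely its GFEM approximation driven by the same source. Indeed, rewriting \eqref{eq:fullydiscrete} as
\begin{equation*}
a(u^n_h,z) + \tau b(u^n_h,z) = \tau(F^n,z) + a(u^{n-1}_h,z), \quad \forall z\in V_h,
\end{equation*}
shows that $u^n_h$ solves \eqref{fullydisc_simple2} with right-hand side $F^n$, while a direct manipulation of \eqref{eq:Xproj1}--\eqref{eq:Xproj2} using \eqref{eq:fullydiscrete} shows that $X^n_v$ and $X^n_w$ solve exactly \eqref{GFEM_simple_1} and \eqref{GFEM_simple_2} with that same source. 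Since the damped wave equation is defined for $n\geq 2$, I view this auxiliary problem as initialized at the time index~$1$ with matching data $X^1 = u^1_h$, so that the error vanishes initially; this plays the role of the condition $\Zlod{0}-\Z{0}=0$ in Theorem~\ref{simple_problem_err} after a harmless shift of indices.

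With this identification, the estimate \eqref{RitzVolterra_err_1} follows by a direct application of \eqref{simple_problem_err1} to the source $F^n=f^n-\ddt^2 u^n_h$. For \eqref{RitzVolterra_err_2}, I would split $F^n = f^n - \ddt g^n$ with $g^n := \ddt u^n_h$, so that $g^1 = \ddt u^1_h$ is an initial quantity available from the data. Using linearity of both the auxiliary problem and its GFEM approximation in the source (with matched, vanishing initial error), the error decomposes as $X^n - u^n_h = e^n_{(1)} + e^n_{(2)}$, where $e^n_{(1)}$ corresponds to source $f^n$ and $e^n_{(2)}$ to source $-\ddt g^n$. Then \eqref{simple_problem_err2} yields
\begin{equation*}
\sum_{j=2}^n \tau \|e^j_{(1)}\|^2_{L_2} \leq C H^2 \sum_{j=2}^n \tau \|f^j\|^2_{L_2},
\end{equation*}
while \eqref{simple_problem_err4} with $g^n = \ddt u^n_h$ yields
\begin{equation*}
\sum_{j=2}^n \tau \|e^j_{(2)}\|^2_{L_2} \leq C H^2 \Bigl( \sum_{j=2}^n \tau \|\ddt u^j_h\|^2_{L_2} + \|\ddt u^1_h\|^2_{L_2} \Bigr).
\end{equation*}
Combining via $(a+b)^2\leq 2(a^2+b^2)$ produces the claimed bound \eqref{RitzVolterra_err_2}.

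The main obstacle I anticipate is purely bookkeeping: Theorem~\ref{simple_problem_err} formulates its auxiliary problem starting at $n=1$ with initial datum $\Z{0}$, whereas the damped wave equation only makes sense from $n=2$ onward and effectively inherits $u^1_h$ as its initial value. A shift of indices (or, equivalently, a re-labeling of the discrete time grid so that the auxiliary problem begins at $t_1$) must be performed consistently throughout. Once this relabeling and the linear decomposition of the source are made precise, the proof is essentially a reduction to the already-established estimates of Theorem~\ref{simple_problem_err}, with no further analytical work required.
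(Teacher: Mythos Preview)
Your proposal is correct and follows essentially the same approach as the paper's proof: shift the auxiliary problem to start at $t_1$ with matching initial data $X^1=u^1_h\in V_\ms$, identify $u^n_h$ and $X^n$ as the solution/approximation pair for the auxiliary problem with source $f^n-\ddt^2 u^n_h$, and then invoke Theorem~\ref{simple_problem_err}. Your explicit linear splitting of the source into $f^n$ and $-\ddt g^n$ with $g^n=\ddt u^n_h$ to apply \eqref{simple_problem_err2} and \eqref{simple_problem_err4} separately is exactly what the paper does in one sentence (the paper writes $g^n=\ddt u^{n+1}_h$, which is the same choice expressed in the unshifted indexing).
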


\begin{proof}
	We let the auxiliary problem \eqref{fullydisc_simple} start at $t_1$. In this case $e^0 = u^1_h - u^1_\ms = 0$, since $u^1_\ms=u^1_h \in V_\ms$. The bound \eqref{RitzVolterra_err_1} now follows directly from \eqref{simple_problem_err1} with $f^n-\ddt^2u^n_h$ as right hand side. The second bound \eqref{RitzVolterra_err_2} follows from \eqref{simple_problem_err2} and \eqref{simple_problem_err4} with $f^n \in L_2(\Omega)$ and $g^n = \ddt u^{n+1}_h$.
\end{proof}

In a similar way me may deduce bounds for the (discrete) time derivative of the error. As a direct consequence of Lemma~\ref{simple_problem_err_timederivative}, we get the following result.

\begin{lemma}\label{RitzVolterra_err_timederivative}
	Let $u^n_h$ be the solution to \eqref{eq:fullydiscrete} and $X^n$ the solution to \eqref{eq:Xproj1}-\eqref{eq:Xproj2}. The following bounds hold
	\begin{align}
	\sum_{j=3}^n \tau \|\ddt &(X^j - u^j_h)\|^2_{L_2} \\&\leq CH^2\Bigg(\sum_{j=3}^n \tau (\|\ddt f^j\|^2_{L_2} +\|\ddt^2 u^j_h\|^2_{L_2}) +\|f^2\|^2_{L_2} + \|\ddt^2 u^2_h\|^2_{L_2}\Bigg),\notag\\
	\sum_{j=3}^n \tau t_j^2\|&\ddt (X^j - u^j_h)\|^2_{L_2} \\&\leq CH^2\Bigg(\sum_{j=3}^n \tau (\|\ddt f^j\|^2_{L_2} +\|\ddt^2 u^j_h\|^2_{L_2}) +t_2^2\|f^2\|^2_{L_2} + t_2^2\|\ddt^2 u^2_h\|^2_{L_2}\Bigg),\notag
	\end{align}
	where C does not depend on the variations in $A$ or $B$. 
	%
\end{lemma}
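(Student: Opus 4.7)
The plan is to follow the same reduction used in the proof of Lemma~\ref{RitzVolterra_err}: interpret $(u^n_h, X^n)$ as the exact/approximate pair $(\Z{n},\Zlod{n})$ of the auxiliary problem started at $t_1$, driven by the effective discrete source $F^n := f^n - \ddt^2 u^n_h$. Since $u^1_h = X^1 \in V_\ms$ the shifted initial error vanishes, so Lemma~\ref{simple_problem_err_timederivative} applies directly to the sequence $\{X^n - u^n_h\}$ modulo the index shift that maps an auxiliary index $j$ to the damped-wave index $j+1$; this is why the sums in the statement begin at $j=3$ and the initial-time terms carry the superscript $2$ rather than $1$.

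Next, I would use linearity of the auxiliary problem to split $F^n = f^n + \ddt(-\ddt u^n_h)$ and write $X^n - u^n_h = E^n_{(1)} + E^n_{(2)}$ for the two corresponding auxiliary errors. The first piece meets the $\ddt f \in L_2$ hypothesis, so \eqref{simple_problem_err_timederivative1} yields the $\sum\tau\|\ddt f^j\|^2_{L_2} + \|f^2\|^2_{L_2}$ contribution; the second piece has discrete divergence-form source with $g^n = -\ddt u^n_h$, so \eqref{simple_problem_err_timederivative2} delivers the analogous $\sum\tau\|\ddt^2 u^j_h\|^2_{L_2} + \|\ddt^2 u^2_h\|^2_{L_2}$ contribution. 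Adding these via the triangle inequality $\|\ddt(X^j - u^j_h)\|^2_{L_2} \leq 2\|\ddt E^j_{(1)}\|^2_{L_2} + 2\|\ddt E^j_{(2)}\|^2_{L_2}$ produces the first stated inequality.

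For the $t_j^2$-weighted estimate, the $E^n_{(2)}$-contribution is obtained directly from \eqref{simple_problem_err_timederivative3} applied to $g^n = -\ddt u^n_h$. The $E^n_{(1)}$-contribution, however, is not literally covered by Lemma~\ref{simple_problem_err_timederivative} since its weighted version is stated only for divergence-form sources. I would derive it by repeating the argument of \eqref{simple_problem_err_timederivative3} verbatim, using the same dual problem \eqref{dual_problem_t} and summation-by-parts identity \eqref{simple_error_eq3}, with the only change being that the secondary summation by parts that rewrites $f^j = \ddt g^j$ is skipped: the term $\sum\tau(x^{j-1}_\f, t_j\ddt f^j)$ is then handled by Cauchy--Schwarz together with \eqref{eq:intestimate} and \eqref{dual_problem_est}, the crude bound $t_j \leq T$ being absorbed into $C$ to produce the $\sum\tau\|\ddt f^j\|^2_{L_2}$ piece. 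The initial boundary term $a(x^1_h, t_1\ddt E^1_{(1)}) = (t_1/\tau) a(x^1_h, E^1_{(1)})$ is bounded using \eqref{simple_problem_err1} at $n=1$ (which supplies $\|E^1_{(1)}\|_{H^1} \leq CH\tau\|f^2\|_{L_2}$); Young's inequality combined with \eqref{dual_problem_est} then turns this into the clean $CH^2 t_1^2\|f^2\|^2_{L_2} \leq CH^2 t_2^2\|f^2\|^2_{L_2}$ contribution, plus a term absorbable into the LHS.

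The main obstacle is precisely this last step: confirming that the $t_1/\tau$ singularity coming from $\ddt E^1_{(1)} = E^1_{(1)}/\tau$ cancels exactly against the factor $\tau$ supplied by \eqref{simple_problem_err1} at $n=1$, so that only a clean $t_1^2$ (hence $t_2^2$) weight survives in front of $\|f^2\|^2_{L_2}$ rather than a crude $T^2$ weight which would defeat the purpose of the weighted estimate near $t=0$. Everything else is a short linear combination of the three parts of Lemma~\ref{simple_problem_err_timederivative} together with the off-by-one index bookkeeping, which is consistent with the author's ``direct consequence'' phrasing.
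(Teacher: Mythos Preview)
Your proposal is correct and matches the paper's intended argument; the paper itself gives no proof beyond the sentence ``as a direct consequence of Lemma~\ref{simple_problem_err_timederivative}''. Your linearity splitting $F^n = f^n + \ddt(-\ddt u^n_h)$ is in fact necessary---applying \eqref{simple_problem_err_timederivative1} to the undivided source $F^n$ would produce $\|\ddt^3 u^j_h\|_{L_2}$ rather than $\|\ddt^2 u^j_h\|_{L_2}$, so the split into an $L_2$-source piece and a divergence-form piece is the right way to read the ``direct consequence''. You are also right that the $t_j^2$-weighted bound for the $f$-piece is not literally contained in Lemma~\ref{simple_problem_err_timederivative}; your fix (rerun the proof of \eqref{simple_problem_err_timederivative3} but stop before the final substitution $f^j=\ddt g^j$) is exactly what the paper's own proof of \eqref{simple_problem_err_timederivative3} already establishes as an intermediate step, so no new idea is needed. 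The cancellation you flag in the boundary term $a(x^1_h,t_1\ddt E^1_{(1)})$ is genuine and works precisely as you describe.
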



\begin{lemma}\label{lemma_error_zero_initial_data}
	Let $u^n_h$ and $\ulod{n}$ be the solutions to \eqref{eq:fullydiscrete} and \eqref{eq_GFEM_full_1}-\eqref{eq:GFEM_full_2}, respectively. Assume that $u_0 = u_1 =0$. The error is bounded by
	\begin{align*}
	\sum_{j=2}^n\tau\|\ulod{j} - u^j_h\|^2_{H^1} \leq C H^2 \Bigg( \sum_{j=1}^n \tau (\|f^j\|^2_{L_2} + \|\ddt f^j\|^2_{L_2}) + \max_{j=1,...,n} \|f^j\|^2_{L_2}\Bigg),
	\end{align*}
	for $n \geq 2$, where C does not depend on the variations in $A$ or $B$.
\end{lemma}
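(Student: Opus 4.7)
The plan is to decompose the error as $\ulod{n} - u^n_h = \theta^n + \psi^n$, where $\psi^n := X^n - u^n_h$ is the auxiliary projection error controlled by Lemmas~\ref{RitzVolterra_err} and~\ref{RitzVolterra_err_timederivative}, and $\theta^n := \ulod{n} - X^n$. Squaring \eqref{RitzVolterra_err_1}, applying Cauchy--Schwarz in time, and invoking \eqref{energy_bound2} (which simplifies under $u_h^0 = u_h^1 = 0$) would give $\sum\tau\|\psi^j\|_{H^1}^2 \leq CH^2\sum\tau\|f^j\|_{L_2}^2$, leaving $\sum\tau\|\theta^j\|_{H^1}^2$ as the real task.

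I would then decompose $\theta^n = \theta_v^n + \theta_w^n \in V_\ms \oplus V_\f$ and subtract \eqref{eq:Xproj1}--\eqref{eq:Xproj2} from \eqref{eq_GFEM_full_1}--\eqref{eq:GFEM_full_2}:
\begin{align*}
\tau(\ddt^2\theta_v^n, z) + \tilde{a}(\theta_v^n, z) &= a(\theta^{n-1}, z) - \tau(\ddt^2(X_v^n - u^n_h), z), & z &\in V_\ms,\\
\tilde{a}(\theta_w^n, z) &= a(\theta^{n-1}, z), & z &\in V_\f,
\end{align*}
with $\tilde{a} = a + \tau b$. Testing the fine-scale equation with $\theta_w^n$ and using the $\tilde{a}$-orthogonality (which rewrites $a(\theta_v^{n-1}, \theta_w^n) = -\tau b(\theta_v^{n-1}, \theta_w^n)$) together with Young's inequality yields the recursion $\|\theta_w^n\|_a^2 \leq \|\theta_w^{n-1}\|_a^2 + \tau\|\theta_v^{n-1}\|_b^2$; iterating from $\theta_w^0 = \theta_w^1 = 0$ reduces the task to bounding $\sum\tau\|\theta_v^j\|_{H^1}^2$. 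Next I would divide the coarse equation by $\tau$ and use orthogonality to replace $\tau^{-1}a(\theta_w^{n-1}, z)$ with $-b(\theta_w^{n-1}, z)$ on $V_\ms$, recasting it as $(\ddt^2\theta_v^n, z) + a(\ddt\theta_v^n, z) + b(\theta_v^n, z) = -b(\theta_w^{n-1}, z) - (\ddt^2(X_v^n - u^n_h), z)$, a backward Euler discretization of a damped wave equation in $V_\ms$; testing with $z = \tau\ddt\theta_v^n$ and summing then produces an energy identity whose LHS controls $\|\ddt\theta_v^n\|_{L_2}^2 + \sum\tau\|\ddt\theta_v^j\|_a^2 + \|\theta_v^n\|_a^2$, with the $a(\theta^{j-1}, \tau\ddt\theta_v^j)$ contribution handled via a discrete product rule combined with the $\theta_w$-recursion.

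The main obstacle is the source term $-\tau(\ddt^2(X_v^n - u^n_h), \ddt\theta_v^n)$, which I split using $X_v^n - u^n_h = \psi^n - X_w^n$. For the $\psi^n$-part, I would rewrite $\tau^2\ddt^2\psi^j = \tau(\ddt\psi^j - \ddt\psi^{j-1})$, apply Cauchy--Schwarz in time and Young's to absorb the $\ddt\theta_v$-terms into the LHS, and invoke Lemma~\ref{RitzVolterra_err_timederivative}; together with \eqref{energy_bound2} and the bound $\|\ddt^2 u_h^2\|_{L_2} \leq C\|f^2\|_{L_2}$ (obtained by testing the discrete equation at $n=2$ under $u_h^0 = u_h^1 = 0$, which forces $u_h^2 = \tau^2\ddt^2 u_h^2$), this controls $\sum\tau\|\ddt\psi^j\|_{L_2}^2$ by $CH^2\bigl(\sum\tau(\|f^j\|_{L_2}^2 + \|\ddt f^j\|_{L_2}^2) + \max_j\|f^j\|_{L_2}^2\bigr)$, precisely matching the RHS of the statement. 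The key observation for the $X_w^n$-part is that $X_w^n \in V_\f = \ker I_H$, so $\|\ddt^2 X_w^n\|_{L_2} \leq CH\|\ddt^2 X_w^n\|_{H^1}$ by the interpolation estimate; a discrete product-rule manipulation of \eqref{eq:Xproj2} yields $\tilde{a}(\ddt X_w^n, z) = -b(X^{n-1}, z)$ on $V_\f$, so $\|\ddt^2 X_w^n\|_{H^1} \leq C\|\ddt X^{n-1}\|_{H^1}$; an analogous energy estimate for the $\psi^n$-equation tested with $\ddt\psi^n$ then gives $\sum\tau\|\ddt\psi^j\|_{H^1}^2 \leq CH^2\sum\tau\|f^j\|_{L_2}^2$, which combined with \eqref{energy_bound1} bounds $\sum\tau\|\ddt X^j\|_{H^1}^2 \leq C\sum\tau\|f^j\|_{L_2}^2$ and supplies the $H^2$ factor. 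A discrete Gronwall argument would then close the bound on $\theta_v$, and the triangle inequality concludes.
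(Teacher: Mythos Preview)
Your decomposition $\ulod{n}-u_h^n=\theta^n+\psi^n$ and the derivation of the separate $V_\ms$/$V_\f$ equations for $\theta_v^n,\theta_w^n$ are correct, and the recursion $\|\theta_w^n\|_a^2\le\|\theta_w^{n-1}\|_a^2+\tau\|\theta_v^{n-1}\|_b^2$ together with the identity $\tilde a(\ddt X_w^n,z)=-b(X^{n-1},z)$ on $V_\f$ are nice observations.  However, the energy argument you propose for $\theta_v^n$ does not close.

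The gap is in the treatment of the source term $-\tau(\ddt^2\psi^n,\ddt\theta_v^n)$.  After testing with $z=\tau\ddt\theta_v^n$ and summing you obtain $\sum_{j}\tau(\ddt^2\psi^j,\ddt\theta_v^j)=\sum_j(\ddt\psi^j-\ddt\psi^{j-1},\ddt\theta_v^j)$, a sum \emph{without} a $\tau$-weight.  Neither ``Cauchy--Schwarz in time and Young's'' nor the crude bound $\|\ddt\psi^j-\ddt\psi^{j-1}\|_{L_2}\le\|\ddt\psi^j\|_{L_2}+\|\ddt\psi^{j-1}\|_{L_2}$ recovers the missing $\tau$: you would need $\sum_j\tau\|\ddt^2\psi^j\|_{L_2}^2$, which Lemma~\ref{RitzVolterra_err_timederivative} does not provide (it only controls $\sum_j\tau\|\ddt\psi^j\|_{L_2}^2$), and Abel summation merely transfers the second difference onto $\theta_v$, giving $\sum_j\tau(\ddt\psi^j,\ddt^2\theta_v^{j+1})$ plus boundary terms $(\ddt\psi^n,\ddt\theta_v^n)$ that your LHS does not control either.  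In short, testing with $\ddt\theta_v^n$ asks for one more time derivative on $\psi$ than the auxiliary lemmas supply.

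The paper avoids exactly this mismatch by first \emph{integrating the $\theta$-equation in time}: it sets $\tilde\theta^n=\sum_{j=2}^n\tau\theta^j$, sums the full error equation on $V_h$ to obtain
\[
(\ddt\theta^n,z)+a(\theta^n,z)+b(\tilde\theta^n,z)=(-\ddt\rho^n,z)+\text{(fine-scale $L_2$ terms)},
\]
and only then tests with $z=\theta^n=\ddt\tilde\theta^n$.  This lowers the required time regularity on $\rho=\psi$ from $\ddt^2$ to $\ddt$, so that $\sum_j\tau\|\ddt\rho^j\|_{L_2}^2$ from Lemma~\ref{RitzVolterra_err_timederivative} suffices; the remaining cross terms $(\cdot,z_\f)$ and $(\ddt w^n,z_\ms)$ pick up the factor $H$ directly from the interpolant and are bounded via \eqref{energy_bound1} and \eqref{ulod_energy_bound}.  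Your $X_w$-manipulation is in the right spirit, but the summation-in-time step is what actually makes the argument close.
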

\begin{proof}
	Begin by splitting the error into two contributions
	\begin{align*}
	\ulod{n} - u^n_h = \ulod{n} - X^n + X^n - u^n_h=: \theta^n + \rho^n,
	\end{align*}
	where $X^n$ is the solution to the simplified problem in\eqref{eq:Xproj1}-\eqref{eq:Xproj2}. By Lemma~\ref{RitzVolterra_err} $\rho^n$ is bounded by
	\begin{align*}
	\|\rho^n\|_{H^1} \leq CH\sum_{j=2}^n\tau\big(\|f^{j}\|_{L_2} + \|\ddt^2 u^j_h\|_{L_2}\big),
	\end{align*}
	and we can now apply the energy bound \eqref{energy_bound2}. It remains to bound $\theta^n$. Recall that for any $z \in V_h$ we have $z = z_\ms + z_\f$ for some $z_\ms \in V_\ms$ and $z_\f \in V_\f$. Using that $\ulod{n}=v^n + w^n$ satisfies \eqref{eq_GFEM_full_1} and the orthogonality \eqref{MSspace} we get
	\begin{align*}
	(\ddt^2 \ulod{n}, z_\ms) + a(\ddt \ulod{n}, z_\ms) + b(\ulod{n}, z_\ms) &= (f^n,z_\ms) + (\ddt^2 w^n,z_\ms).
	\end{align*}
	Similarly, due to \eqref{eq:GFEM_full_2} and the orthogonality, 
	\begin{align*}
	(\ddt^2 \ulod{n}, z_\f) + a(\ddt \ulod{n}, z_\f) + b(\ulod{n}, z_\f) &= (\ddt^2\ulod{n},z_\f).
	\end{align*}
	For $X^n$ we use \eqref{eq:Xproj1}-\eqref{eq:Xproj2} and the orthogonality to deduce
	\begin{align*}
	(\ddt^2 X^n, z) + a(\ddt X^n, z) + b(X^n, z) &= (\ddt^2 X^n, z) + (f^n-\ddt^2u^n_h,z_\ms), \quad z \in V_h,
	\end{align*}
	Hence, $\theta^n$ satisfies
	\begin{align*}
	(\ddt^2 \theta^n, z) + a(\ddt \theta^n, z) + b(\theta^n, z)  &= (-\ddt^2 \rho^n,z) - (\ddt^2 u^n_h, z_\f) \\ &\qquad+ (\ddt^2\ulod{n}, z_\f) + (\ddt^2 w^n, z_\ms), \quad z\in V_h,
	\end{align*}
	with $\theta^0=\theta^1=0$, since $\ulod{0} = u^0_h = X^0$ and $\ulod{1} = u^1_h = X^1$. 
	Let $\tilde \theta^n = \sum_{j=2}^n \tau \theta^j$. Multiplying by $\tau$ and summing over $n$ gives
	\begin{align}\label{eq_integrated}
	(\ddt \theta^n, z) + a(\theta^n, z) + b(\tilde \theta^n, z) & \leq (-\ddt \rho^n,z) - (\ddt u^n_h - \ddt u^1_h, z_\f) \\&\quad+ (\ddt \ulod{n} - \ddt \ulod{1}, z_\f)
	+ (\ddt w^n- \ddt w^1, z_\ms),\notag
	\end{align}
	where we have used that $\theta^1=\theta^0=\rho^1=\rho^0=0$.
	Using the interpolant $I_H$ we deduce
	\begin{align*}
	(\ddt u^n_h, z_\f) + (\ddt \ulod{n}, z_\f) + (\ddt w^n, z_\ms) &\leq CH(\|\ddt u^n_h\|_{L_2} + \|\ddt \ulod{n}\|_{L_2})\|z\|_{H^1} \\ &\qquad + CH\|\ddt \ulod{n}\|_{H^1}\|z_\ms\|_{L_2},
	\end{align*}
	for $1\leq n \leq N$. Let $\alpha(n) = \|\ddt u^n_h\|_{L_2} + \|\ddt \ulod{n}\|_{H^1}$. Since $\|z_\ms\|_{L_2}\leq C\|z\|_{H^1}$ and $\alpha(1)=0$ due to the vanishing initial data, we get
	\begin{align*}
	(\ddt \theta^n, z) + a(\theta^n, z) + b(\tilde \theta^n, z) & \leq (-\ddt \rho^n,z) + CH\alpha(n)\|z\|_{H^1}, \ z\in V_h.
	\end{align*}
	Now, choose $z = \theta^n = \ddt \tilde \theta^n$ in \eqref{eq_integrated}. We get
	\begin{align*}
	\frac{1}{2}\|\theta^n\|^2_{L_2} - \frac{1}{2}\|\theta^{n-1}\|^2_{L_2} + &\tau\|\theta^n\|^2_{a} + \frac{1}{2}\|\tilde \theta^n\|^2_b - \frac{1}{2}\|\tilde \theta^{n-1}\|^2_b \\&\leq \tau \|\ddt \rho^n\|_{L_2}\|\theta^n\|_{L_2} + CH\tau\alpha(n)\|\theta^n\|_{H^1}.
	\end{align*}
	Summing over $n$ gives
	\begin{align*}
	\|\theta^n\|^2_{L_2} + \sum_{j=2}^n \tau\|\theta^j\|^2_{H^1} + \|\tilde\theta^n\|^2_{H^1} \leq \sum_{j=2}^n \tau \|\ddt \rho^j\|_{L_2}\|\theta^j\|_{L_2} + CH\sum_{j=2}^n\tau\alpha(j)\|\theta^j\|_{H^1}.
	\end{align*}
	Now using that $\|\theta^n\|_{L_2}\leq \|\theta^n\|_{H^1}$ and Young's weighted inequality, $\theta^j$ can be kicked back to the left hand side. We deduce
	\begin{align*}
	\sum_{j=2}^n \tau\|\theta^j\|^2_{H^1} &\leq C\sum_{j=2}^n \tau \|\ddt \rho^j\|^2_{L_2} + CH^2\sum_{j=2}^n\tau\alpha(j)^2.
	\end{align*}
	Using Lemma~\ref{RitzVolterra_err_timederivative} we have
	\begin{align*}
	\sum_{j=2}^n \tau\|\theta^j\|^2_{H^1} &\leq CH^2\Bigg(\sum_{j=2}^n \tau(\|\ddt f^j\|^2_{L_2} + \|\ddt^2 u^j_h\|^2_{L_2}) + \|\ddt^2 u^2_h\|^2_{L_2}\Bigg)  + CH^2\sum_{j=2}^n\tau\alpha(j)^2.
	\end{align*}
	To bound $\|\ddt^2 u^{2}_h\|^2_{L_2}$, we consider \eqref{eq:fullydiscrete} for $n=2$ and choose $v=\ddt^2u^2_h$, which gives
	\begin{align*}
	(\ddt^2 u^2_h, \ddt^2 u^2_h) + a(\ddt u^2_h, \ddt^2 u^2_h) + b(u^2_h, \ddt^2 u^2_h) = (\ddt f^2,\ddt^2 u^2_h).
	\end{align*}
	Due to the vanishing initial data $\ddt u^2_h = \tau^{-1}u^2_h$ and $\ddt^2u^2_h = \tau^{-2}u^2_h$. We get
	\begin{align}\label{n2_bound}
	\|\ddt^2 u^2_h\|^2_{L_2} + \frac{1}{\tau^3}\|u^2_h\|^2_a + \frac{1}{\tau^2}\|u^2_h\|^2_b = (f^2, \ddt^2 u^2_h),
	\end{align} 
	and we deduce
	\begin{align*}
	\|\ddt^2 u^2_h\|^2_{L_2} \leq  C\|f^2\|^2_{L_2}.
	\end{align*}
	
	All terms, except $\sum_{j=2}^n\tau\|\ddt \ulod{j}\|^2_{H^1}$ that appears in $\sum_{j=2}^n\alpha^2(j)$, can now be bounded by using the regularity in Theorem~\ref{thm:energy_bounds}. To bound $\sum_{j=2}^n\tau\|\ddt \ulod{j}\|^2_{H^1}$ we choose $z=\ddt v^n$ and $z=\ddt w^n$ in \eqref{eq_GFEM_full_1} and \eqref{eq:GFEM_full_2} respectively. Adding the two equations and using the orthogonality between $V_\ms$ and $V_\f$ we achieve
	\begin{align*}
	(\ddt^2v^n,\ddt v^n) + a(\ddt \ulod{n}, \ddt \ulod{n}) + b(\ulod{n},\ddt \ulod{n}) &= (f^n, \ddt v^n)\\& \leq C_\epsilon \|f^n\|^2_{L_2} + \epsilon\|\ddt v^n\|^2_{L_2}.
	\end{align*}
	Note that $\|\ddt v^n\|_{L_2} \leq C\|\nabla \ddt v^n\|_{L_2} \leq C\vertiii{\ddt v^n} = C \vertiii{\ddt \ulod{n}} \leq C \|\ddt \ulod{n}\|_a$ , so we may choose $\epsilon$ small enough such that $\vertiii{\ddt \ulod{n}}$ can be kicked to the left hand side. As in the proof of Theorem~\ref{thm:energy_bounds} we may now deduce
	\begin{align}\label{ulod_energy_bound}
	\|\ddt v^n\|^2_{L_2} + \sum_{j=2}^n \tau \|\ddt \ulod{j}\|^2_{H^1} + \|\ulod{n}\|^2_{H^1} \leq C\Bigg(\sum_{j=2}^n \|f^j\|^2_{L_2} + \|\ddt u^1_h\|^2_{L_2} + \|u^1_h\|^2_{H^1}\Bigg),
	\end{align}
	where we have used that $v^1 = \ulod{1} = u^1_h \in V_\ms$. However, we have assumed vanishing initial data so these terms disappear. The lemma follows.
\end{proof}

\begin{lemma}\label{lemma_error_zero_rhs}
	Let $u^n_h$ and $\ulod{n}$ be the solutions to \eqref{eq:fullydiscrete} and \eqref{eq_GFEM_full_1}-\eqref{eq:GFEM_full_2}, respectively. Assume that $f=0$. The error is bounded by
	\begin{align*}
	\sum_{j=2}^n\tau t_j^2\|\ulod{j} - u^j_h\|^2_{H^1} \leq C H^2(\|\ddt u^1_h\|^2_{H^1} + \|u^1_h\|^2_{H^1} + \|u^0_h\|^2_{H^1}), \quad n \geq 2,
	\end{align*}
	where C does not depend on the variations in $A$ or $B$.
\end{lemma}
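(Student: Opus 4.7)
The strategy is to adapt the proof of Lemma~\ref{lemma_error_zero_initial_data} to the setting $f=0$ with nonzero initial data, replacing the unweighted energy argument by a $t_j^2$-weighted one. The $t_j^2$ weight plays the role of a parabolic-type smoothing factor: it tames quantities such as $\|\ddt^2 u^2_h\|_{L_2}^2$ which, without a source, only satisfy $\tau\|\ddt^2 u^2_h\|_{L_2}^2\leq C(\|\ddt u^1_h\|_{H^1}^2+\|u^1_h\|_{H^1}^2)$ via \eqref{energy_bound2}, so that $t_2^2\|\ddt^2 u^2_h\|_{L_2}^2 \leq C\tau(\|\ddt u^1_h\|_{H^1}^2+\|u^1_h\|_{H^1}^2)$ remains harmless as $\tau\to 0$.

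I would split $\ulod{n}-u^n_h=\theta^n+\rho^n$ with $\rho^n=X^n-u^n_h$ and $\theta^n=\ulod{n}-X^n$, as in Lemma~\ref{lemma_error_zero_initial_data}. For the $\rho$-part, Lemma~\ref{RitzVolterra_err} combined with Cauchy--Schwarz yields $\|\rho^n\|_{H^1}^2\leq CH^2 t_n\sum_{k=2}^n\tau\|\ddt^2 u^k_h\|_{L_2}^2$; multiplying by $\tau t_j^2$, summing, swapping the order of summation (so that $\sum_{j=k}^n\tau t_j^3\leq CT^4$), and invoking \eqref{energy_bound2} with $f=0$ gives the required weighted bound on $\sum\tau t_j^2\|\rho^j\|_{H^1}^2$. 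The $\theta$-equation derived in Lemma~\ref{lemma_error_zero_initial_data} still holds (its derivation does not use $f$), and its integrated form is unchanged: the identities $\ulod{0}=u^0_h$ and $\ulod{1}=u^1_h$ yield $\ddt\ulod{1}=\ddt u^1_h$ and $\ddt w^1=0$, so that the $\ddt u^1_h$ boundary terms cancel between the $u^j_h$- and $\ulod{j}$-contributions, leaving exactly the bound $|\mathrm{RHS}(z)|\leq(\|\ddt\rho^n\|_{L_2}+CH\alpha(n))\|z\|_{H^1}$ with $\alpha(n)=\|\ddt u^n_h\|_{L_2}+\|\ddt\ulod{n}\|_{H^1}$.

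The main step is to test this integrated equation with $z=t_j^2\theta^j$, multiply by $\tau$, and sum. Abel summation on the $(\ddt\theta^j,t_j^2\theta^j)$ and $t_j^2 b(\tilde\theta^j,\theta^j)$ terms (using $t_{j+1}^2-t_j^2\leq 2T\tau$ and $\theta^0=\theta^1=0$) produces a weighted energy inequality whose main right-hand side contributions are controlled by \eqref{simple_problem_err_timederivative3} in Lemma~\ref{simple_problem_err_timederivative} (applied with $g^j=\ddt u^{j+1}_h$), by Theorem~\ref{thm:energy_bounds}, and by \eqref{ulod_energy_bound} with $f=0$. The main obstacle is the Abel remainder $CT\sum_j\tau(\|\theta^j\|_{L_2}^2+\|\tilde\theta^j\|_b^2)$, in which the Cauchy--Schwarz bound $\|\tilde\theta^j\|_b^2\leq CT\sum_{k=2}^j\tau\|\theta^k\|_{H^1}^2$ reintroduces an unweighted $H^1$-norm of $\theta$ that cannot be absorbed directly by the weighted left-hand side. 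I would close this by a discrete Gr\"onwall argument for the cumulative quantity $S_n:=\sum_{j=2}^n\tau t_j^2\|\theta^j\|_{H^1}^2$, yielding a bound with constant depending on $T$. Combining the $\rho$- and $\theta$-estimates then yields the claim.
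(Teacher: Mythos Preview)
Your decomposition $\ulod{n}-u^n_h=\theta^n+\rho^n$, the treatment of $\rho^n$, and the observation that the $n=1$ boundary terms in the integrated equation cancel (since $\ddt\ulod{1}=\ddt u^1_h$ and $\ddt w^1=0$) are all correct. The gap is in the closing step. After testing the integrated equation with $t_j^2\theta^j$ and summing, the Abel remainders are (keeping the sharp bound $t_{j}^2-t_{j-1}^2\leq 2\tau t_j$, not $2T\tau$) of the form $\sum_j\tau t_j\|\theta^j\|_{L_2}^2$ and $\sum_j\tau t_j\|\tilde\theta^j\|_b^2$. Your proposed bound $\|\tilde\theta^j\|_b^2\leq CT\sum_{k\leq j}\tau\|\theta^k\|_{H^1}^2$ produces, after swapping sums, a term of size $C(T)\sum_k\tau\|\theta^k\|_{H^1}^2$ with no $t_k$ weight. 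This cannot be absorbed by the left-hand side $S_n=\sum_j\tau t_j^2\|\theta^j\|_{H^1}^2$: for small $j$ one has $t_j^2\sim \tau^2$, so no choice of $\epsilon$ lets you kick back, and writing $\tau\|\theta^k\|_{H^1}^2=t_k^{-2}\cdot\tau t_k^2\|\theta^k\|_{H^1}^2$ gives a Gr\"onwall kernel that blows up like $\tau^{-2}$ near $k=2$. In short, a Gr\"onwall argument on $S_n$ cannot close because the right-hand side is not expressible in terms of $S_j$ with a summable kernel.

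The paper circumvents this by a second layer of time integration rather than Gr\"onwall. Starting from the integrated equation \eqref{eq_integrated}, one sums once more in $n$ to obtain a twice-integrated identity for $\theta^n$, $\tilde\theta^n$ and $\sum_j\tau\tilde\theta^j$, with right-hand side controlled by $\|\rho^n\|_{L_2}$ and the undifferentiated quantity $\beta(n)=\|u^n_h\|_{L_2}+\|\ulod{n}\|_{H^1}$. Testing this twice-integrated equation with $\tilde\theta^n$ (unweighted) gives a direct bound on $\sum_j\tau\|\tilde\theta^j\|_{H^1}^2$, which controls $\sum_j\tau t_j\|\tilde\theta^j\|_b^2$. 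Testing it with $\theta^n$ and multiplying by $t_n$ then bounds $\sum_j\tau t_j\|\theta^j\|_{L_2}^2$; the only dangerous term this step produces is $CH\sum_j\tau t_j(\beta(j)+\beta(1)+\alpha(1))\|\theta^j\|_{H^1}$, which carries an explicit factor $H$ and can therefore be combined with the weighted $H^1$ left-hand side via Young's inequality. The resulting $\beta(j)$ terms are finally handled by the energy estimates \eqref{energy_bound1} and \eqref{ulod_energy_bound}. This cascade of integrated equations with different test functions is the missing ingredient in your proposal.
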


\begin{proof}
	We follow the steps in the proof of Lemma~\ref{lemma_error_zero_initial_data} to equation \eqref{eq_integrated}. Note that $\|\rho^n\|_{H^1}$ can be bounded by Lemma~\ref{RitzVolterra_err} and the energy bound in \eqref{energy_bound2} with $f=0$. 
	
	Now, let $\tilde \theta^n = \sum_{j=2}^n \tau \theta^j$. Choose $z = \theta^n = \ddt \tilde \theta^n$ in \eqref{eq_integrated} and multiply by $\tau t_n^2$. We get 
	\begin{align*}
	\frac{t_n^2}{2}\|\theta^n\|^2_{L_2} &- \frac{t_{n-1}^2}{2}\|\theta^{n-1}\|^2_{L_2} + \tau t_n^2\|\theta^n\|^2_{a} + \frac{t_n^2}{2}\|\tilde \theta^n\|^2_b - \frac{t_{n-1}^2}{2}\|\tilde \theta^{n-1}\|^2_b \\&\leq \tau t_n^2\|\ddt \rho^n\|_{L_2}\|\theta^n\|_{L_2} + CHt_n^2\tau(\alpha(n)+\alpha(1))\|\theta^n\|_{H^1}
	\\&\qquad \qquad+ \frac{(t_n^2-t_{n-1}^2)}{2}\|\theta^{n-1}\|^2_{L_2} + \frac{(t_n^2-t_{n-1}^2)}{2}\|\tilde \theta^{n-1}\|^2_b.
	\end{align*}
	Summing over $n$ and using $t_n^2-t_{n-1}^2 \leq 2\tau t_n$ gives
	\begin{align}\label{main_eq}
	t_n^2\|\theta^n\|^2_{L_2} + &\sum_{j=2}^n \tau t_j^2\|\theta^j\|^2_{H^1} + t_n^2\|\tilde \theta^n\|^2_{H^1} \leq C\sum_{j=2}^n \tau t_j^2\|\ddt \rho^j\|_{L_2}\|\theta^j\|_{L_2} \\&\quad+ CH\sum_{j=2}^n\tau t_j^2(\alpha(j)+\alpha(1))\|\theta^j\|_{H^1} 
	+ C\sum_{j=2}^n \tau t_j\|\theta^j\|^2_{L_2} + C\sum_{j=2}^n \tau t_j\|\tilde \theta^j\|^2_b. \notag
	\end{align}
	From the first two sums on the right hand side we can kick $t_j\|\theta^j\|_{L_2}\leq t_j\|\theta^j\|_{H^1}$ and $t_j\|\theta^j\|_{H^1}$ to the left hand side. The remaining two sums needs to be bounded by other energy estimates.
	
	Multiply \eqref{eq_integrated} by $\tau$ and sum over $n$ to get
	\begin{align}\label{eq_integrated_twice}
	(\theta^n, z) + &a(\tilde \theta^n, z) + b\Bigg(\sum_{j=2}^n \tau \tilde \theta^j, z\Bigg) \leq (\rho^n,z) - (u^n_h - u^1_h, z_\f) + (\ulod{n} - \ulod{1}, z_\f) \\
	&\quad+ ( w^n- w^1, z_\ms)  + t_n((\ddt u^1_h,z_\f) - (\ddt \ulod{1},z_\f) - (\ddt w^1,z_\ms)).\notag
	\end{align}
	where we have used $\theta^1 = \rho^1 = 0$. As in the proof of Lemma~\ref{lemma_error_zero_initial_data} we get
	\begin{align*} 
	(u^n_h, z_\f) + (\ulod{n}, z_\f) + (w^n, z_\ms) &\leq CH(\|u^n_h\|_{L_2} + \|\ulod{n}\|_{L_2})\|z\|_{H^1} \\&\qquad+ CH\| \ulod{n}\|_{H^1}\|z_\ms\|_{L_2},
	\end{align*}
	for $1 \leq n \leq N$. Let $\beta(n) = \|u^n_h\|_{L_2} + \|\ulod{n}\|_{H^1}$. Choose $z = \tilde \theta^n = \ddt \sum_{j=1}^n\tau\tilde \theta^j$. Similar to above energy estimates, we get
	\begin{align}\label{eq_integrated_twice_1}
	\|\tilde \theta^n\|^2_{L_2} + \sum_{j=2}^n\tau\|\tilde \theta^j\|^2_a + \|\sum_{j=2}^n \tau \tilde \theta^j\|^2_b  \leq \sum_{j=2}^n \tau \|\rho^j\|^2_{L_2} + CH^2\sum_{j=2}^n\tau (\beta(j) + \beta(1) + \alpha(1))^2.
	\end{align}
	Since
	$
	\sum_{j=2}^n \tau t_j\|\tilde \theta^j|^2_b \leq C(t_n)\sum_{j=2}^n \tau \|\tilde \theta^j\|^2_a
	$ 
	we may use \eqref{eq_integrated_twice_1} in \eqref{main_eq}. This gives
	\begin{align}\label{main_eq_2}
	t_n^2&\|\theta^n\|^2_{L_2} + \sum_{j=2}^n \tau t_j^2\|\theta^j\|^2_{H^1} + t_n^2\|\tilde \theta^n\|^2_{H^1} \leq C\sum_{j=2}^n \tau (t_j^2\|\ddt \rho^j\|^2_{L_2}+\|\rho^j\|^2_{L_2}) \\&\quad+ C\sum_{j=2}^n \tau t_j\|\theta^j\|^2_{L_2} 
	+CH^2\sum_{j=2}^n\tau (t_j^2(\alpha(j)+\alpha(1))^2 + (\beta(j) + \beta(1) + \alpha(1))^2). \notag
	\end{align}
	It remains to bound $C\sum_{j=2}^n \tau t_j\|\theta^j\|^2_{L_2}$. For this purpose, choose $z = \theta^n = \ddt \tilde \theta^n$ in \eqref{eq_integrated_twice}. Multiply by $t_n\tau$ and sum over $n$ to achieve
	\begin{align}\label{eq_integrated_twice_2}
	\sum_{j=2}^n \tau t_j\|\theta^j\|^2_{L_2} + t_n\|\tilde \theta^n\|^2_a + &\sum_{j=2}^n t_j\tau b\Bigg(\sum_{k=2}^j \tau \tilde \theta^k, \ddt \tilde \theta^j\Bigg)  \leq \sum_{j=1}^n \tau t_j\|\rho^j\|^2_{L_2} + \sum_{j=2}^n \tau \|\tilde \theta^j\|^2_a \\&\quad + CH\sum_{j=2}^n\tau t_j(\beta(j) + \beta(1) + \alpha(1))\|\theta^j\|_{H^1}.\notag
	\end{align}
	Note that $\|\theta^j\|_{H^1}$ in the last sum in only present in the right hand side. The second term on the right hand side is bounded by \eqref{eq_integrated_twice_1}. For the term involving the bilinear form $b(\cdot,\cdot)$ we use summation by parts to get
	\begin{align*}
	-\sum_{j=2}^n \tau b\Bigg(t_j\sum_{k=2}^j \tau \tilde \theta^k, \ddt \sum_{k=2}^j \tau \theta^k\Bigg) &\leq \sum_{j=2}^n \tau b\Bigg(t_j\tilde \theta^j + \sum_{k=2}^j \tau \tilde \theta^k, \tilde \theta^{j-1}\Bigg)  - b\Bigg(t_n\sum_{j=2}^n \tau \tilde \theta^j, \tilde \theta^n\Bigg) \\&\leq C\sum_{j=2}^n \tau t_j\|\tilde \theta^j\|^2_b 
	+ C\|\sum_{j=2}^n \tau \tilde \theta^j\|^2_b + C_\epsilon t_n^2\|\tilde \theta^n\|^2_{H^1}.
	\end{align*}
	Here the constant $C_\epsilon$ can be made arbitrarily small due to Young's weighted inequality. The first two terms can be bounded by \eqref{eq_integrated_twice_1}. Thus, \eqref{eq_integrated_twice_2} becomes
	\begin{align*}
	\sum_{j=2}^n \tau t_j\|\theta^j\|^2_{L_2} + t_n\|\tilde \theta^n\|^2_a &\leq \sum_{j=1}^n \tau( t_j\|\rho^j\|^2_{L_2} + \|\rho^j\|^2_{L_2}) + CH\sum_{j=2}^n\tau t_j(\beta(j) + \beta(1)\\& + \alpha(1))\|\theta^j\|_{H^1} + CH^2\sum_{j=2}^n\tau (\beta(j) + \beta(1) + \alpha(1))^2.
	\end{align*}
	Using this in \eqref{main_eq_2} we arrive at
	\begin{align}\label{final_bound}
	t_n^2\|\theta^n\|^2_{L_2} + \sum_{j=2}^n \tau t_j^2&\|\theta^j\|^2_{H^1} + t_n^2\|\tilde \theta^n\|^2_{H^1} \leq C\sum_{j=2}^n \tau (t_j^2\|\ddt \rho^j\|^2_{L_2} + t_j\|\rho^j\|^2_{L_2}) \notag\\&\quad +
	CH^2\sum_{j=2}^n\tau (t_j^2(\alpha(j)+\alpha(1))^2 + (\beta(j) + \beta(1) + \alpha(1))^2)
	\\&\quad +  CH\sum_{j=2}^n\tau t_j(\beta(j) + \beta(1) + \alpha(1))\|\theta^j\|_{H^1}
	+ C_\epsilon t_n^2\|\tilde \theta^n\|^2_{H^1}\notag.
	\end{align}
	Using Lemma~\ref{RitzVolterra_err_timederivative} and Lemma~\ref{RitzVolterra_err} with $f=0$ we deduce for the first two terms in \eqref{final_bound}
	\begin{align*}
	C\sum_{j=2}^n \tau (t_j^2\|\ddt \rho^j\|^2_{L_2} + t_j\|\rho^j\|^2_{L_2}) &\leq CH^2 \Bigg(\sum_{j=2}^n \tau  \|\ddt^2u^j_h\|^2_{L_2} + t^2_2\|\ddt^2u^2_h\|^2_{L_2}\Bigg),
	\end{align*}
	where we can use \eqref{energy_bound2} for $n=2$ and $f=0$ to bound $\ddt^2u^2_h$. We get
	\begin{align*}
	t_2^2\|\ddt^2u^2_h\|^2_{L_2} \leq C\tau(\|\ddt u^1_h\|^2_{H^1} + \|u^1_h\|^2_{H^1}).
	\end{align*}
	For the remaining terms in \eqref{final_bound} we note that $t_j\|\theta^j\|_{H^1}$ now may be kicked to left hand side using Cauchy--Schwarz and Young's weighted inequality. The term involving $C_\epsilon$ can also be moved to the left hand side. All terms involving $\alpha(j)$ and $\beta(j)$ can be bounded by \eqref{energy_bound1} and \eqref{ulod_energy_bound}. This finishes the proof after using the regularity in Theorem~\ref{thm:energy_bounds} with $f=0$.
\end{proof}

\subsection{Error bound for the ideal method}

We get the final result by combining the two previous lemmas.

\begin{corollary}
	\label{cor:final}
	Let $u^n_h$ and $\ulod{n}$ be the solutions to \eqref{eq:fullydiscrete} and \eqref{eq_GFEM_full_1}-\eqref{eq:GFEM_full_2}, respectively. The solutions can be split into $u^n_h = u^n_{h,1} + u^n_{h,2}$ and $\ulod{n} = u^n_{\mathrm{lod},1} + u^n_{\mathrm{lod},2}$, where the first part has vanishing initial data, and the second part a vanishing right hand side. The error is bounded by
	\begin{align*}
	\sum_{j=2}^n\tau\|u^j_{h,1} - u^j_{\mathrm{lod},1}\|^2_{H^1} \leq C H^2 \Bigg( \sum_{j=1}^n \tau (\|f^j\|^2_{L_2} + \|\ddt f^j\|^2_{L_2}) + \max_{j=1,...,n} \|f^j\|^2_{L_2}\Bigg), 
	\end{align*}
	and
	\begin{align*}
	\sum_{j=2}^n\tau t_j^2\|u^j_{h,2} - u^j_{\mathrm{lod},2}\|^2_{H^1} \leq C H^2(\|\ddt u^1_h\|^2_{H^1} + \|u^1_h\|^2_{H^1} + \|u^0_h\|^2_{H^1}), 
	\end{align*}
	for $n\geq 2$.
\end{corollary}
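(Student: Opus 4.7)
The plan is to exploit the linearity of both the fully discrete scheme \eqref{eq:fullydiscrete} and the generalized finite element method \eqref{eq_GFEM_full_1}--\eqref{eq:GFEM_full_2} to reduce the corollary to a direct application of Lemma~\ref{lemma_error_zero_initial_data} and Lemma~\ref{lemma_error_zero_rhs}. Since all involved bilinear forms, inner products, and the projection $R_\f$ defining $V_\ms$ and $V_\f$ are independent of the right hand side and the initial data, a superposition argument applies to both schemes.

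First I would define $u^n_{h,1}$ as the solution of \eqref{eq:fullydiscrete} with source $f^n$ but with vanishing initial data $u^0_{h,1}=u^1_{h,1}=0$, and $u^n_{h,2}$ as the solution of \eqref{eq:fullydiscrete} with source zero but initial data $u^0_{h,2}=u^0_h$, $u^1_{h,2}=u^1_h$. By linearity of \eqref{eq:fullydiscrete}, $u^n_h = u^n_{h,1} + u^n_{h,2}$ for all $n\geq 0$. Analogously I would define $u^n_{\mathrm{lod},1}$ and $u^n_{\mathrm{lod},2}$ as the solutions of the coupled system \eqref{eq_GFEM_full_1}--\eqref{eq:GFEM_full_2} with the respective splittings of data; note that both the multiscale part $v^n$ and the fine scale correction $w^n$ decompose additively since \eqref{eq_GFEM_full_1} and \eqref{eq:GFEM_full_2} are linear in their data. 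Hence $\ulod{n} = u^n_{\mathrm{lod},1} + u^n_{\mathrm{lod},2}$.

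Having established the two decompositions, the first bound follows immediately by applying Lemma~\ref{lemma_error_zero_initial_data} to the pair $(u^n_{h,1},u^n_{\mathrm{lod},1})$, which has vanishing initial data and source $f^n$. The second bound follows by applying Lemma~\ref{lemma_error_zero_rhs} to the pair $(u^n_{h,2},u^n_{\mathrm{lod},2})$, whose source vanishes and whose initial data $u^0_h, u^1_h$ appear on the right hand side of the stated bound (note that $\ddt u^1_{h,2}=\ddt u^1_h$ since the part with vanishing initial data contributes $\ddt u^1_{h,1}=u^1_{h,1}/\tau = 0$).

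I do not anticipate any real obstacle; the only point worth double-checking is that the initial data assignment $\ulod{0}=u^0_h,\ \ulod{1}=u^1_h \in V_\ms$ underlying both lemmas is preserved by the splitting, which is automatic since each subproblem inherits initial data already lying in $V_\ms$ (zero in one case and the original data in the other, which is assumed in $V_\ms$ as per the setup preceding Lemma~\ref{RitzVolterra_err}). The proof is then essentially a one-line invocation of the two preceding lemmas.
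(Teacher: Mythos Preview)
Your proposal is correct and matches the paper's own proof essentially line for line: the paper simply states that the result is a direct consequence of Lemma~\ref{lemma_error_zero_initial_data} and Lemma~\ref{lemma_error_zero_rhs} together with linearity, which is exactly the superposition argument you carry out. Your additional remarks on $\ddt u^1_{h,2}=\ddt u^1_h$ and on the initial data lying in $V_\ms$ are the natural verifications that make this one-line invocation rigorous.
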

\begin{proof}
	This is a direct consequence of Lemma~\ref{lemma_error_zero_initial_data} and Lemma~\ref{lemma_error_zero_rhs} together with the fact that the problem is linear so the error can be split into two contributions satisfying the conditions of each lemma.
\end{proof}

\begin{remark}
	\label{rmk:loc}
	The result from Corollary \ref{cor:final} is derived for the ideal method presented in \eqref{eq_GFEM_full_1}-\eqref{eq:GFEM_full_2}. The GFEM in \eqref{eq:xink}-\eqref{eq_GFEM_full_loc_1} will yield yet another error from the localization procedure. However, due to the exponential decay in Theorem \ref{thm:expdecay} and Theorem \ref{thm:expdecay2}, it holds for the choice $k\approx |\log(H)|$ that the perturbation from the ideal method is of higher order and the derived result in Corollary \ref{cor:final} is still valid. For the details regarding the error from the localization procedure, we refer to \cite{MalqvistPeterseim}.
\end{remark}

\subsection{Initial data}
\label{sec:initialdata}

For general initial data $u^0_h, u^1_h \in V_h$ we consider the projections $R_\ms u^0_h$ and $R_\ms u^1_h$, where $R_\ms = I - R_\f$ is the Ritz-projection onto $V_\ms$. Let $v$ be the difference between two solutions to the damped wave equation with the different initial data. From \eqref{energy_bound1} it follows that 
\begin{align*}
\|v\|^2_{H^1} \leq C(\|\ddt (u^1_h - R_\ms u^1_h)\|^2_{L_2} + \|u^1_h - R_\ms u^1_h\|^2_b), 
\end{align*}
where we have chosen to keep the $b$-norm. For the first term we may use the interpolant $I_H$ to achieve $H$. For the second term use
\begin{align}\label{data_constant}
\|u^1_h - R_\ms u^1_h\|^2_b \leq \frac{\beta_+}{\alpha_- + \tau \beta_-}(\|u^1_h - R_\ms u^1_h\|^2_a + \tau \|u^1_h - R_\ms u^1_h\|^2_b),
\end{align}
If the initial data fulfills the following condition for some $g\in L_2(\Omega)$
\begin{align}\label{well_prepared_data}
a(u^1_h,v) + \tau b(u^1_h,v) = (g,v), \quad \forall v \in V_h,
\end{align}
then we may deduce
\begin{align*}
\|u^1_h - R_\ms u^1_h\|^2_a + \tau \|u^1_h - R_\ms u^1_h\|^2_b = (g,u^1_h-R_\ms u^1_h) \leq CH\|g\|_{L_2}\|u^1_h-R_\ms u^1_h\|_{H^1}.
\end{align*}
Hence, the error introduced by the projection of the initial data is of order $H$. The condition \eqref{well_prepared_data} appears when applying the LOD method to classical wave equations, see \cite{Wave}, where it is referred to as ``well prepared data''. We note in our case that if $B$ is small compared to $A$, that is if the damping is strong, then the constant in \eqref{data_constant} is small. In some sense, this means that the condition in \eqref{well_prepared_data} is of ``less importance'', which is consistent with the fact that strong damping reduces the impact of the initial data over time.

\section{Reduced basis method}
\label{sec:rb}

The GFEM as it is currently stated requires us to solve the system in \eqref{eq:xink} for each coarse node in each time step, i.e.\ $N$ number of times. We will alter the method by applying a reduced basis method, such that it will suffice to find the solutions for $M < N$ time steps, and compute the remaining in a significantly cheaper and efficient way.

First of all, we note how the system \eqref{eq:xink} that $\xi^n_{k,x}$ solves resembles a parabolic type equation with no source term. That is, the solution will decay exponentially until it is completely vanished. An example of how $\xi^n_{k,x}$ vanish with increasing $n$ can be seen in Figure \ref{fig:DecayingCorrection}, where the coefficients are given as 
\begin{equation*}
A(x) = \Big(2-\sin\big(\tfrac{2\pi x}{\varepsilon_A}\big)\Big)^{-1} \ \text{and}  \quad B(x) = \Big(2-\cos\big(\tfrac{2\pi x}{\varepsilon_B}\big)\Big)^{-1},
\end{equation*}
with $\varepsilon_A = 2^{-4}$ and $\varepsilon_B = 2^{-6}$.

\begin{figure}[b!]
	\centering
	\includegraphics[width=10cm]{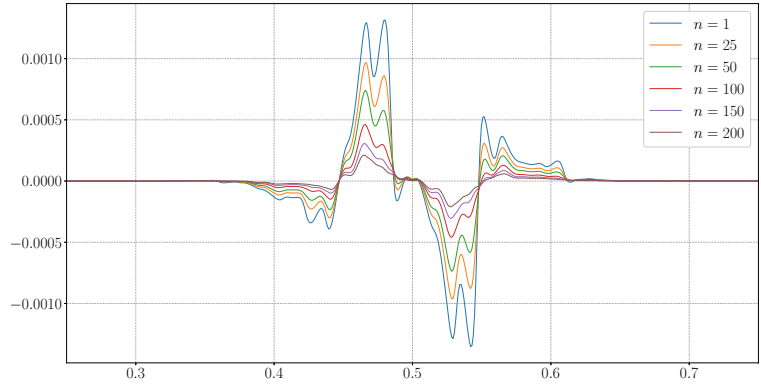}
	\caption{The behavior of the correction functions $\xi^n_{k,x}$ with increasing $n$. The time step is $\tau= 0.01$ and $k$ is here chosen so that the support covers the entire interval.}
	\label{fig:DecayingCorrection}
\end{figure}

In Figure \ref{fig:DecayingCorrection} it is also seen how the solutions decay with a similar shape through all time steps. This gives the idea that it is possible to only evaluate the solutions for a few time steps, and utilize these solutions to find the remaining ones. This idea can be further investigated by storing the solutions $\{\xi^n_{k,x}\}_{n=1}^N$ and analyzing the corresponding singular values. The singular values are plotted and seen in Figure \ref{fig:SingularValues}. It is seen how the values decrease rapidly, and that most of the values lie on machine precision level. In practice, this means that the information in $\{\xi^n_{k,x}\}_{n=1}^N$ can be extracted from only a few $\xi^n_{k,x}$'s. We use this property to decrease the computational complexity by means of a reduced basis method. We remark that singular value decomposition is not used for the method itself, but is merely used as a tool to analyze the possibility of applying reduced basis methods.

\begin{figure}
	\centering
	\includegraphics[width=10cm]{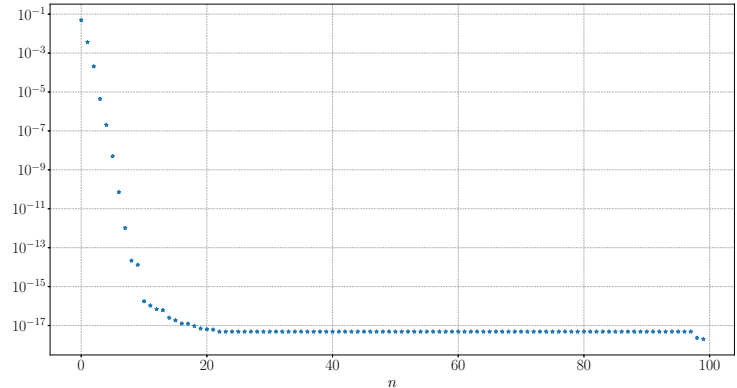}
	\caption{The singular values obtained when performing a singular value decomposition of the matrix created by storing the finescale corrections $\{\xi^n_{k,x}\}_{n=1}^N$ with $N=100$.}
	\label{fig:SingularValues}
\end{figure}

The main idea behind reduced basis methods is to find an approximate solution in a low-dimensional space $V^{\mathrm{RB}}_{M,k,x}$, which is created using a number of already computed solutions. More precisely, to construct a basis for this space, one first computes $M$ solutions $\{\xi^m_{k,x}\}_{m=1}^M$, where $M < N$. By orthonormalizing these solutions using e.g.\ Gram--Schmidt orthonormalization, we yield a set of vectors $\{\zeta^m_{k,x}\}_{m=1}^M$, called the reduced basis. Consequently, the reduced basis space becomes $V^{\mathrm{RB}}_{M,k,x} = \text{span}(\{\zeta^m_{k,x}\}_{m=1}^M)$ for each node $x\in \mathcal{N}$. With this space created, the procedure of finding $\{\xi^n_{k,x}\}_{n=1}^N$ is now reduced to finding $\{\xi^n_{k,x}\}_{n=1}^M$, and then approximate the remaining solutions by $\{\xi^{n,\mathrm{rb}}_{k,x}\}_{n=M+1}^N \subset V^{\mathrm{RB}}_{M,k,x}$. The matrix system to solve for a solution in $V^{\mathrm{RB}}_{M,k,x}$ is of dimension $M\times M$, so when $M$ is chosen small, the last $N-M$ solutions are significantly cheaper to compute, which solves the issue of computing $N$ problems on the finescale space.

When constructing the reduced basis $\{\zeta^m_{k,x}\}_{m=1}^M$, it is important to be aware of the fact that the solution corrections $\{\xi^n_{k,x}\}_{n=1}^N$ all show very similar behavior. In practice, this implies that many of the $\xi^n_{k,x}$'s are linearly dependent, hence causing floating point errors to become of significant size in the RB-space $V^{\mathrm{RB}}_{M,k,x}$. To work around this issue, one may include a relative tolerance level that removes a vector from the basis if it is too close to being linearly dependent to one of the previously orthonormalized vectors. One may moreover use this tolerance level as a criterion for the amount of solutions, $M$, to pre-compute. That is, once the first vector is removed from the orthonormalization process, then the RB-space contains sufficient information and no more solutions need to be added.

In total, the novel method first requires that we solve $N_H$ number of systems on the localized fine scale in order to construct the multiscale space $V_{\ms,k}$. Moreover, we require to solve a localized fine system $N_H$ times for $M$ time steps to create the RB-space $V^{\mathrm{RB}}_{M,k,x}$ for each coarse node $x\in \mathcal{N}$. By utilizing the RB-space, the remaining $N-M$ finescale corrections are then solved for in an $M\times M$ matrix system, and we yield the sought solution $u^{N, \mathrm{rb}}_{\mathrm{lod}, k}$ by computing a matrix system on the coarse grid with the multiscale space $V_{\ms,k}$.

\section{Numerical examples}
\label{sec:numexamples}

In this section we present numerical examples that illustrate the performance of the established theory. For all examples, we consider the domain to be the unit square $\Omega = [0,1] \times [0,1]$. The coefficients $A(x,y)$ and $B(x,y)$ used in all examples are generated randomly with values in the interval $[10^{-1},10^{3}]$, and examples of such are seen in Figure \ref{fig:coefficients}. Moreover, as initial value for each example we set $u_0 = u_1 = 0$, and the source function is given by $f = 1$.

\begin{figure}
    \centering
    \begin{subfigure}[b]{0.4\textwidth}
        \includegraphics[width=\textwidth]{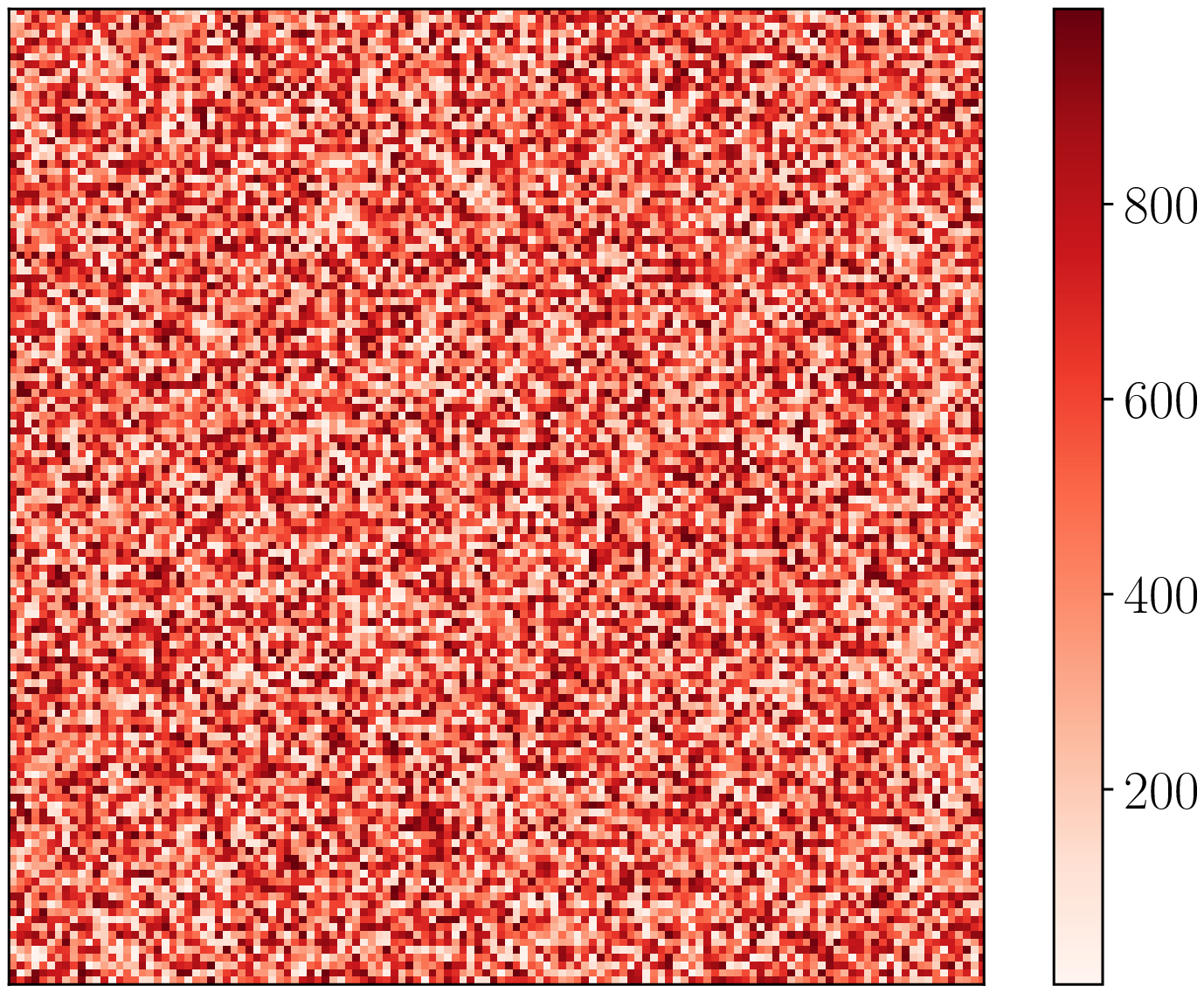}
        \caption{$A(x,y)$.}
        \label{fig:gull}
    \end{subfigure}
    ~ 
    \begin{subfigure}[b]{0.4\textwidth}
        \includegraphics[width=\textwidth]{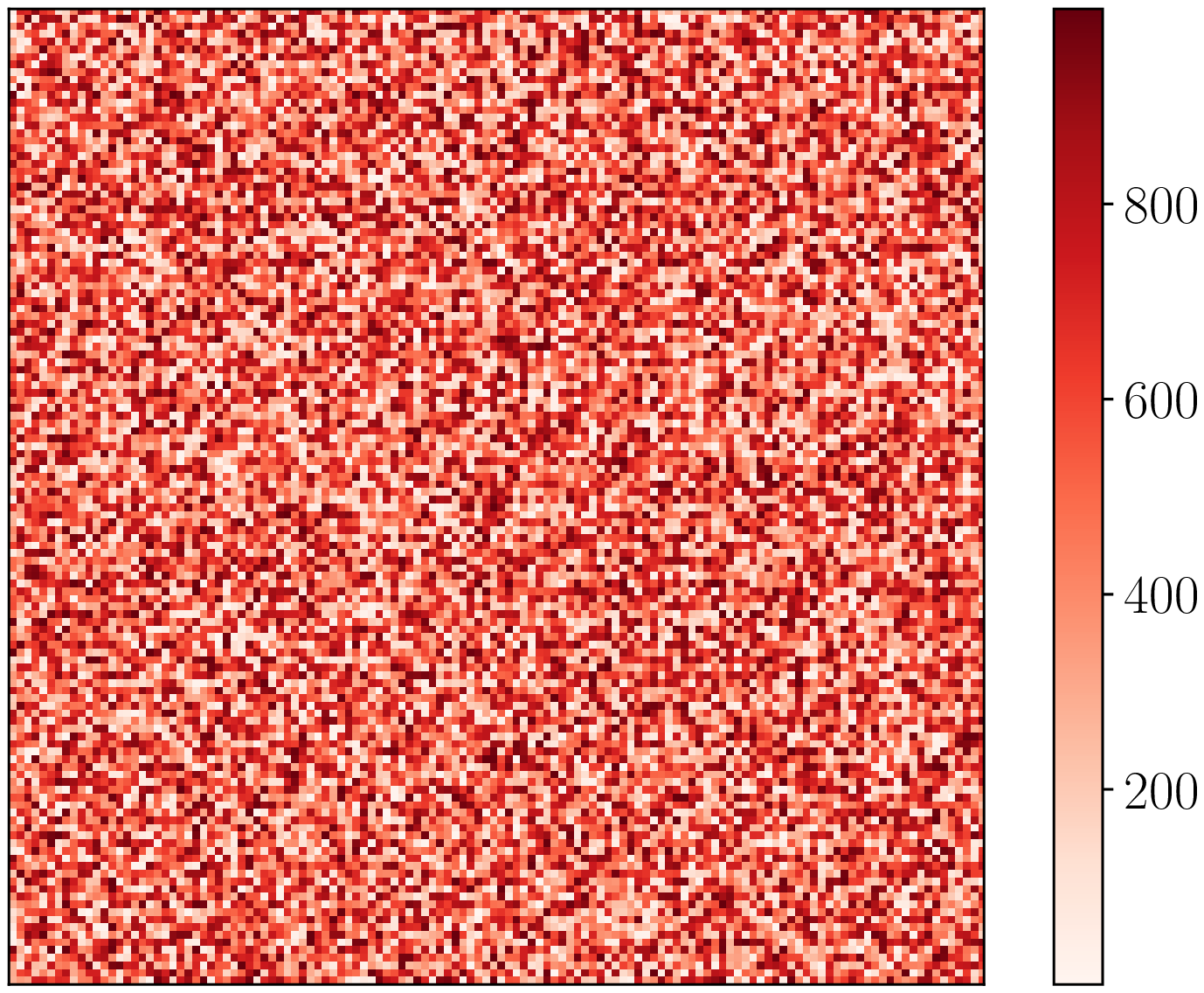}
        \caption{$B(x,y)$.}
        \label{fig:tiger}
    \end{subfigure}
    \caption{The two different coefficients used for the numerical examples. The contrast is $\alpha_+/\alpha_-= \beta_+/\beta_- = 10^4$.}\label{fig:coefficients}
\end{figure}

The first example is used to show how the performance is effected by the localization parameter $k$. Here, we evaluate the solution on the full grid, $\ulod{n}$, and compare it with the localized solution, $\ulodk{n}$, as $k$ varies. For the example the time step $\tau = 0.02$ was used and final time was set to $T = 1$. The fine and coarse meshes were set to $h = 2^{-7}$ and $H = 2^{-4}$ respectively, and we let $k=2,3,...,7$. The relative error between the functions can be seen in Figure \ref{fig:errork}. Here we can see how the error decays exponentially as $k$ increases, verifying the theoretical findings regarding the localization procedure.

For the second example, the performance of the GFEM in \eqref{eq:xink}-\eqref{eq_GFEM_full_loc_1} depending on the coarse mesh width $H$ is shown. For this example, the fine mesh width is set to $h=2^{-8}$, and for each coarse mesh width the localization parameter is set to $k=\log_2(1/H)$. Moreover, the time step is set to $\tau = 0.02$ (for the GFEM as well as the reference solution) and the solution is evaluated at $T=1$. To compute the error, we use a FEM solution on the fine mesh as a reference solution. The error as a function of $1/H$ can be seen in Figure \ref{fig:errorconvergence}. Here it is seen how the error for the novel method decays faster than linearly, confirming the error estimates derived in Section \ref{sec:errorestimates}. For comparison, Figure \ref{fig:errorconvergence} also shows the error of the standard FEM solution, as well as the solution using the standard LOD method with correction solely on $A$ and $B$ respectively, i.e.\ corrections based on the bilinear forms $a(\cdot,\cdot)$ and $b(\cdot,\cdot)$ respectively and without finescale correctors. As expected, the error of these methods stay at a constant level through all coarse grid sizes.

\begin{figure}
	\centering
	\includegraphics[width=11cm]{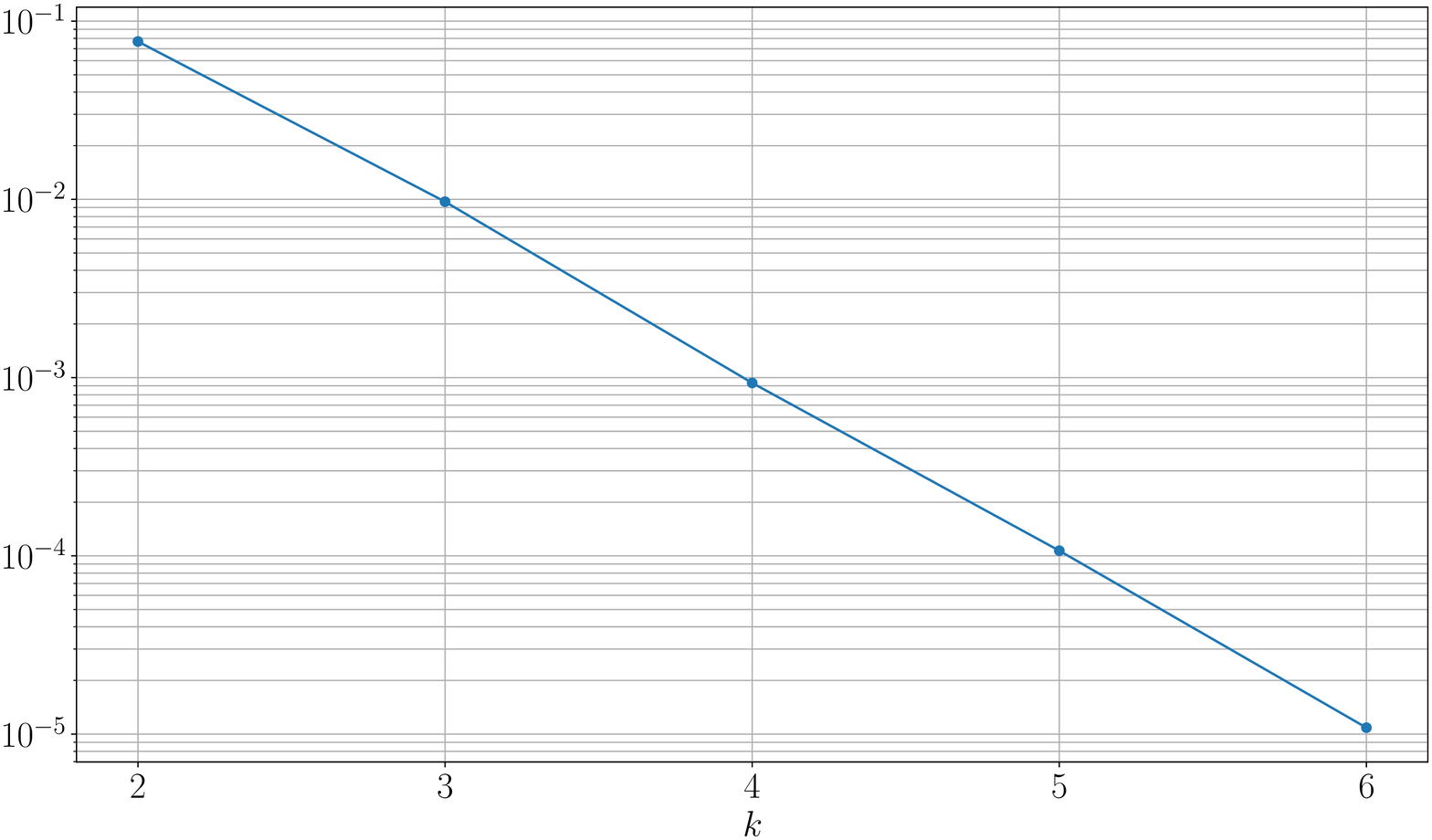}
	\caption{Relative $H^1$-error $\|\ulod{n} - \ulodk{n}\|_{H^1}/\|\ulod{n}\|_{H^1}$ between the non-localized and localized method, plotted against the layer number $k$.}
	\label{fig:errork}
\end{figure}

\begin{figure}
	\centering
	\includegraphics[width=11cm]{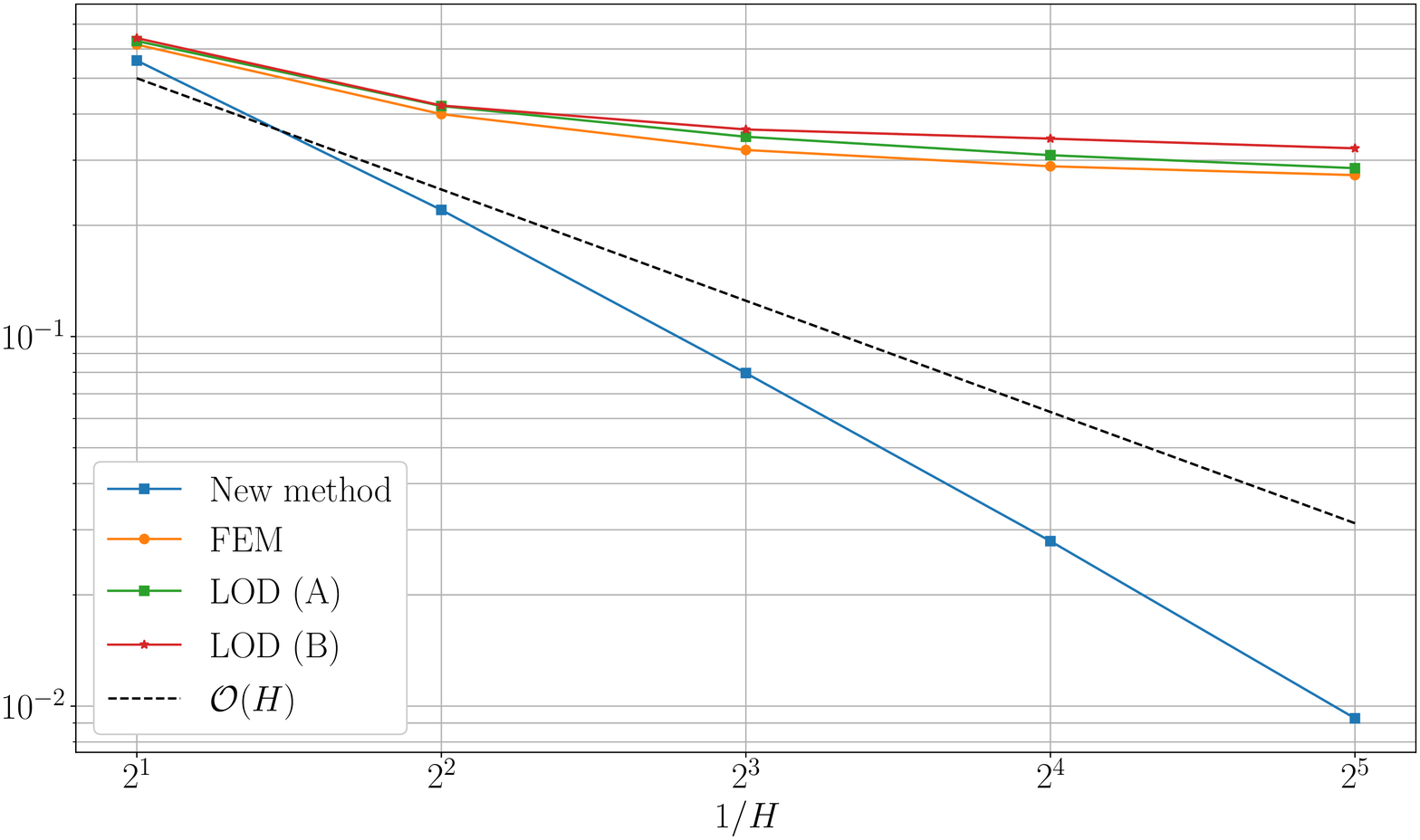}
	\caption{Relative $H^1$-error $\|u^{n}_{\mathrm{ref}} - \ulodk{n}\|_{H^1}/\|u^n_{\mathrm{ref}}\|_{H^1}$ between the reference solution and the approximate solution computed with the proposed method (without reduced basis computations).}
	\label{fig:errorconvergence}
\end{figure}

At last, we compute the solution where the system \eqref{eq:xink} is computed using the reduced basis approach. For this example, we let the number of pre-computed solutions $M$ vary, and see how the error between the solutions $\ulodk{n}$ and $u^{n,\mathrm{rb}}_{\mathrm{lod},k}$ behaves. In the example we have the fine mesh $h=2^{-8}$, the coarse mesh $H=2^{-5}$, the time step $\tau = 0.02$, and the final time $T=1$. The result can be seen in Figure \ref{fig:errorrb}. Here it is seen how the error decreases rapidly with the amount of pre-computed solutions. Note that it is sufficient to compute approximately 10 solutions to yield an error smaller than the discretization error for the main method in Figure \ref{fig:errorconvergence}. This for the case when the number of time steps are $N=50$. We emphasize that a large increment in time steps does not impact the number of pre-computed solutions $M$ significantly, making the RB-approach relatively more efficient the more time steps that are considered.

\begin{figure}
    \centering
    \includegraphics[width=11cm]{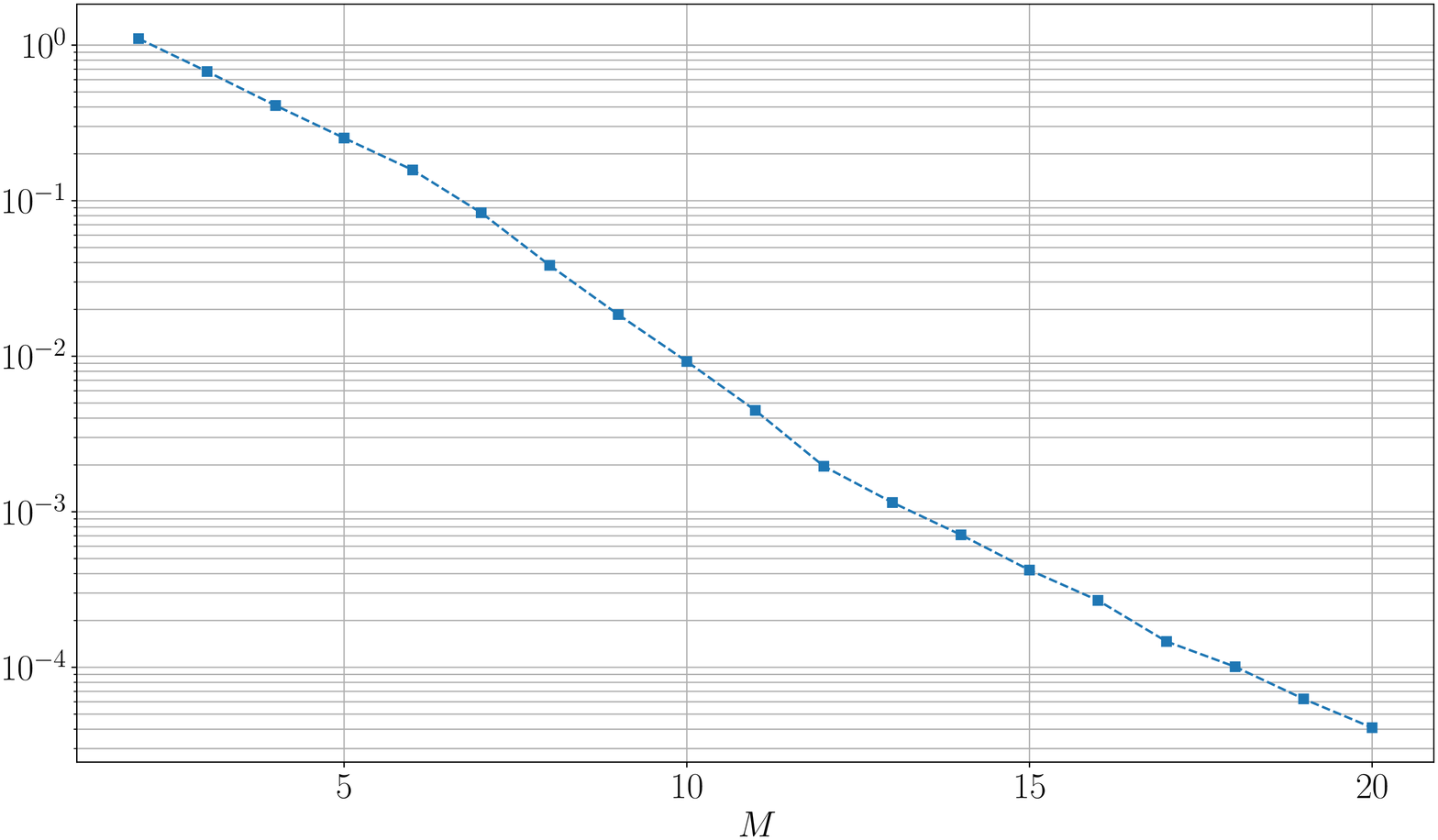}
    \caption{Relative $H^1$-error $\|\ulodk{n} - u^{n, \mathrm{rb}}_{\mathrm{lod}, k}\|_{H^1}/\|\ulodk{n}\|_{H^1}$ between the solution with and without the reduced basis approach, plotted against the number of pre-computed solutions.}
    \label{fig:errorrb}
\end{figure}

\bibliographystyle{abbrv}
\bibliography{References}

\end{document}